\newtheorem{theorem}{Theorem}
\newtheorem{definition}{Definition}
\newtheorem{proposition}{Proposition}
\newtheorem{remark}{Remark}
\newenvironment{proof}[1][Proof]{\noindent\textbf{#1.} }{\ \rule{0.5em}{0.5em}}
\begin{document}

\title{ON DEGENERATE PLANAR HOPF BIFURCATIONS}
\author{M. R. Ricard \\
Faculty of Mathematics and Computer Science, \\
University of Havana, \\
CP.10400, Cuba\\
rricard@matcom.uh.cu}
\date{}
\maketitle

\begin{abstract}
Our concern is the study of degenerate Hopf bifurcation of smooth planar
dynamical systems near isolated singular points. To do so, we propose to
split up the definition of degeneracy into two types. Degeneracy of first
kind shall means that no limit cycle surrounding the steady state can emerge
after or before the critical point, with the possible emergence of limit
cycles surrounding the point at infinity. Degeneracy of second kind shall
means that either several limit cycles or semistable cycles as a limiting
case, emerge surrounding the steady state super or subcritically. In
degenerate bifurcation of second kind we also show that the radius of the
emerging cycle tends to zero with an "anomalous" order as the bifurcation
parameter tends to the critical value. Finally, we give a sufficient
condition for degenerate bifurcations of second kind up to $6$%
-jet-equivalence, and show some \textquotedblleft typical\textquotedblright\
forms for degenerate bifurcations.

\textbf{AMS Subject Classification:} 34C23, 37G15, 34C29

\textbf{Keywords:} Hopf bifurcation; limit cycles; averaging method
\end{abstract}

\section{Introduction}

The goal of this paper is the study of degenerate Hopf bifurcation (\textbf{%
HB}) near the critical value of a one-parameter family of smooth planar
vector fields in a neighborhood of an isolated singular point. The HB is
usually associated to the emergence of limit cycles after or before the
critical value of the bifurcation parameter. An isolated periodic orbit of a
vector field on the plane, is a limit cycle. The HB phenomenon is studied
recurrently in literature, and its main details and implications for
dynamical systems of higher dimensions via the Center Manifold Theorem are
gathered in the monograph \cite{marsden}. Several efforts to compute
generalized HB in finite dimensional systems can be found in literature (see 
\cite{gross} and references therein). Applications of the HB in finite
dimensional dynamical systems often arise in the study of population-based
models, for instance in predator-prey systems featured by the Allee effect 
\cite{aguirre}; in chemical systems, as in the Schnakenberg's \cite%
{edelstein}; in the study of coupled systems near a supercritical HB \cite%
{drover}; in the theory of electronic circuits \cite{chen}; in mathematical
economics \cite{gandolfo}; in the modeling of mechanical systems \cite%
{sotomayorNA}; even in the study of travelling waves phenomena \cite%
{murrayII} and, generally speaking, almost whenever the considered model
shows non-linear oscillations. Remarkable applications of HB in fluid
dynamics can be found in \cite{marsden} and references therein. In addition,
the topic is extensively treated in the theory of bifurcations with higher
codimension, as in the Turing-Hopf instabilities \cite{mrrsm}. Nevertheless,
in order to focalize the scenario, we suggest in this paper to split the
classification of degeneracy into two different kinds. Degeneracy of first
kind shall means that$\ $no limit cycle surrounding the steady state emerges
neither after nor before the critical value of the bifurcation parameter,
with the possible emergence of limit cycles surrounding the point at
infinity \cite{gasull}. The second kind of degeneracy shall means that
multiple limit cycles or semistable cycles as a limiting case, emerge either
at super- or at subcritical bifurcation surrounding the steady state. We
focus our attention in such a system that, in a neighborhood of the origin,
can be written in the form 
\begin{equation}
\overset{\cdot }{X}=F\left( X,a\right)  \label{variations}
\end{equation}%
where 
\begin{equation}
F=\dsum\limits_{k+l=1}^{R}\left( 
\begin{array}{c}
\sigma _{kl}^{1}\  \\ 
\sigma _{kl}^{2}%
\end{array}%
\right) x^{k}\ y^{l}+O\left( \left\Vert X\right\Vert ^{R+1}\right)  \label{F}
\end{equation}%
is the Taylor expansion of $F$ in a neighborhood of the origin, the
coefficients $\sigma _{kl}^{m}=\sigma _{kl}^{m}\left( a\right) $ are real,
and $X=\left( x,y\right) ^{T}$. Here $a$ is a real parameter, and $F$ is at
least a $C^{0}$-function of $a$. Any smooth dynamical system in a vicinity
of an isolated singular point $P_{a}$ on the plane, can be rewritten in the
form Eq.\ref{variations} by a parameter-dependent shift of coordinates which
relocates the singular point $P_{a}$ of the system into the origin of
coordinates. Eq.\ref{variations} is called the system \emph{in variations}
corresponding to a given former system near the singular point $P_{a}$. Any
reference to the steady state $P_{a}$ of the former system will be
associated to the punctual orbit at the origin of the Eq.\ref{variations}.
The vector field with polynomial components of degree $R$ represented in the
main part in Eq.\ref{F}, is called the $R$\emph{-jet }of Eq.\emph{\ref%
{variations} }around the origin, which is usually denoted $j_{R}\left(
F\right) \left( 0\right) $. Two$\ $smooth vector fields defined in a
neighborhood of the origin are called $R$\emph{-jet equivalent} if their $R$%
-jet coincide.

Dynamical systems featured by an HB often arise in mathematical models for
biological or chemical reaction systems \cite{edelstein}. In general, these
models lead to polynomial vector fields having fifth degree at most.
Moreover, some complex interactions between the different components in the
system may lead to a representation in which a rational fraction with linear
or quadratic denominator appear in the former system. For instance, this is
the case in the Michaelis-Menten kinetics or in the prey-predator Holling
systems in presence of Allee effect \cite{aguirre}. In such situations,
after rescaling the temporal variable, the system can be transformed into an
orbitally equivalent \cite{kuznetsov} polynomial system of degree at most
six. {}

We recall that only few combinations of coefficients corresponding to odd
terms in the reaction system can contribute to the emergence of limit cycles
at bifurcation. We have called these the \emph{Hopf coefficients}. These
Hopf coefficients can be easily calculated and will play a similar role than
such the Lyapunov coefficients do \cite{sotomayor}. We suggest a new
classification of degenerate bifurcations in terms of the Hopf coefficients,
through the \textquotedblleft discriminant\textquotedblright\ introduced in 
\cite{mrrsm}. In this direction, we introduce the concepts of degenerate HB
of first and second kind.

In accordance with the proposed classification we subdivide the HB Theorem,
in order to emphasize differences in the resulting behavior at bifurcation.
We prove that, at degenerate bifurcation of first kind no limit cycle
surrounding the steady state emerges, being allowed the emergence of limit
cycles surrounding the point at infinity. At supercritical (subcritical)
degenerate HB of second kind multiple or semistable limit cycles emerge
surrounding the steady state. In correspondence to the procedure in \cite%
{mrrsm}, we shall see that, at a non-degenerate HB, a single limit cycle
emerges. Each kind of degenerate HB\ is complemented with appropriate
examples.

The period of the emerging limit cycles in one-parameter bifurcations
attract the attention of researchers (see \cite{gasull}) showing that, the
main term in the asymptotic of the period of the emerging periodic solution,
characterizes the bifurcation. Here we focus our attention in a procedure
that gives us simultaneously an asymptotic estimate of the radius of the
cycle and the frequency of the corresponding periodic orbit. As we shall
see, the\ radius of the limit cycles and the period of the periodic
solutions emerging at degenerate HB of second kind, tends to zero with an
\textquotedblleft anomalous\textquotedblright\ order as the bifurcation
parameter tends to the critical value.

The plan of the paper is as follows. In Section \ref{Section Preliminaries}
we summarize previous results in the treatment of an HB taken basically from 
\cite{mrrsm}. In Section \ref{Section classification}, the notion of
discriminant is quoted, which is used in Definition \ref{definition
bifurcations} to classify HB taking into account its asymptotic behavior as
the trace of the Jacobian tends to zero. The splitted version of the HB
Theorem in \ref{hopf th}, \ref{hopf degenerate 1st} and \ref{hopf degenerate
th} in this Section describe the emergence of limit cycles in
non-degenerate, degenerate of first and of second kind respectively.
Finally, in Section \ref{SectionNormalForms} are gathered "typical" forms
corresponding to each type of degenerate bifurcation. We include a
sufficient condition for degenerate HB of second kind of Eq.\ref{variations}
valid up to $6$-jet-equivalence .

\section{Preliminaries\label{Section Preliminaries}}

Let $\delta _{a}$ and $\tau _{a}$ be the determinant and the trace 
\begin{eqnarray}
\delta _{a} &=&\det \left( J_{a}\right)  \label{jacobian} \\
\tau _{a} &=&\func{trace}\left( J_{a}\right)  \label{trace jacobian}
\end{eqnarray}%
of the Jacobian matrix 
\begin{equation}
J_{a}=\left( 
\begin{array}{cc}
\sigma _{10}^{1} & \sigma _{01}^{1} \\ 
\sigma _{10}^{2} & \sigma _{01}^{2}%
\end{array}%
\right)  \label{Ja}
\end{equation}%
at the origin $O$ of the function $F$ in Eq.\ref{variations}. The subindex
in Eqs.\ref{jacobian} and \ref{trace jacobian} indicates a functional
dependence respect to $a$, which varies in an open small neighborhood $U$ of
the point $a_{\ast }$ at which the trace vanish to change its sign. We shall
assume that, the function $a\leadsto \tau _{a}$ is an homeomorphism between $%
U$ and a neighborhood $V$ of $\tau _{a}=0$, so the transversality condition $%
\tau _{a}^{\prime }\left( a_{\ast }\right) \neq 0$ is not required.
Consequently, the parameter $\tau _{a}$, $\left\vert \tau _{a}\right\vert
\ll 1$, can be considered as the \emph{intrinsic} bifurcation parameter (see 
\cite{mrrsm}) and, $\tau _{a}=0$ is the critical value. The HB appears
provided the inequality 
\begin{equation}
\tau _{a}^{2}-4\delta _{a}<0  \label{complex eigenvalues cond}
\end{equation}%
holds for any value $\tau _{a}$ in some neighborhood of $\tau _{a}=0$. Then,
for every value $a\in U$, or equivalently, for every value $\tau _{a}\in V$,
we have 
\begin{equation}
\delta _{a}\ >0\text{ .}  \label{positive jacobian}
\end{equation}%
Further, if $\tau _{a}<0$ (respect. $\tau _{a}>0$) the origin is a stable
(respect. unstable) focus. The bifurcation is \emph{subcritical} if there is
a limit cycle emerging for negative values of $\tau _{a}$ close enough to
zero. The bifurcation is \emph{supercritical} if there is a limit cycle
emerging for positive small values of $\tau _{a}$. From Eq.\ref{complex
eigenvalues cond} follows $\sigma _{01}^{1}\cdot \sigma _{10}^{2}<0$ and we
do not loose generality assuming $\sigma _{01}^{1}<0$. Besides, there is no
added restriction if we consider the Jacobian matrix in the system Eq.\ref%
{variations} to have the simplest form:%
\begin{equation}
J_{a}=\frac{1}{2}\left( 
\begin{array}{cc}
\tau _{a} & -\Lambda _{a} \\ 
\Lambda _{a} & \tau _{a}%
\end{array}%
\right)  \label{J}
\end{equation}%
where 
\begin{equation*}
\Lambda _{a}=+\sqrt{4\delta _{a}-\tau _{a}^{2}}>0
\end{equation*}%
in a neighborhood of $\tau _{a}=0$. So, we shall assume in the following
that the Jacobian in Eq.\ref{Ja} has the form Eq.\ref{J}. If necessary, the
system Eq.\ref{variations} can be rewritten through a linear transformation
of variables in order that the Jacobian has the required form Eq.\ref{J},
whenever the condition Eq.\ref{complex eigenvalues cond} holds.

\subsection{Averaging Hopf periodic solutions\label{subsection hopf}}

Let us quote in this Subsection some results about the procedure in the
study of HB proposed in \cite{mrrsm}. Following that paper, we rewrite the
system Eq.\ref{variations}, in the form%
\begin{equation}
\overset{\cdot }{X}=J_{a}X+\Psi \left( X\right)  \label{linearization}
\end{equation}%
where $X=\left( x\left( t\right) ,y\left( t\right) \right) ^{T}$, and the
vector function $\Psi $ is given by the Taylor expansion of the difference 
\begin{equation*}
\Psi \left( X\right) =F\left( X,a\right) -J_{a}X
\end{equation*}%
so, $\Psi \left( X\right) $ contains all nonlinearities. In the function $%
\Psi \left( X\right) $ we include the Taylor terms until a required
precision, say up to $R$-jet-equivalence, together with the corresponding
remainder. First we assume that $\Psi \left( X\right) $ is analytical and a
bit later, in Remark \ref{smooth}, we turn back into the smooth case. In 
\cite{mrrsm}, the authors proposed an algorithm allowing the reduction of an
analytical reaction system showing an HB into a second order differential
equation representing a weakly nonlinear oscillator in normal form. This
transform of variables is analytical and nonlinear in general, but it was
proved there that it is enough to consider the linear part of this transform
to obtain the equation of the oscillator preserving the required accuracy.
The main idea of this procedure is that the transform of variables can be
taken \textquotedblleft close\textquotedblright\ to the appropriate linear
transform in a neighborhood of the origin. So we quote the procedure in \cite%
{mrrsm}, and consider an invertible analytical transform of variables
between neighborhoods of the origin 
\begin{equation}
Y=\mathcal{H}\left( X\right) =\Gamma X+\mathcal{G}\left( X\right)
\label{Y=Gamma X}
\end{equation}%
where $\Gamma =\left( \gamma _{ij}\right) $ is a non-singular matrix, and $%
\mathcal{G}\left( X\right) $ be analytical without linear terms. By the
Inverse Function Theorem, the existence of the inverse is guaranteed because 
$\Gamma $ is non-singular and $\mathcal{H}$ has smooth continuous
derivatives. The inverse to Eq.\ref{Y=Gamma X} has the form 
\begin{equation}
X=\mathcal{H}^{-1}\left( Y\right) =\Gamma ^{-1}Y+\mathcal{K}\left( Y\right) 
\text{ .}  \label{inverse change}
\end{equation}

\begin{definition}
We say that the diffeomorphisms $\mathcal{H}$ \emph{and }\textbf{$\Gamma $} 
\emph{have a contact at the origin of order} $S\in 
\mathbb{N}
$, $S\geq 1,$ if 
\begin{equation}
\mathcal{H}\left( X\right) -\Gamma X=O\left( \left\Vert X\right\Vert
^{S+1}\right)  \label{N contact}
\end{equation}%
as $\left\Vert X\right\Vert \rightarrow 0$.
\end{definition}

We would like to substitute the function $\mathcal{H}$ in Eq.\ref{Y=Gamma X}
by an equivalent simpler one, say $\Gamma $, in the procedure of rewriting
the system Eq.\ref{variations} in new easy-handled variables to be used in
Proposition \ref{Prop gamma generators}. In this endeavor we get more
precision as the order of contact $S$ between $\mathcal{H}$ and $\Gamma $ be
greater. Further, from Eq.\ref{N contact} follows that $\mathcal{H}^{-1}$
exists whenever $\Gamma $ is invertible, and it can be also concluded that $%
\mathcal{H}^{-1}$ and $\Gamma ^{-1}$have a contact of order $S$. Following
the ideas in \cite{mrrsm} it can be proved that, if $\Gamma $ satisfies
certain \textquotedblleft concordance\textquotedblright\ condition and $%
\mathcal{H}$ has a contact with $\Gamma $ at the origin of order $R$, then $%
Y=\mathcal{H}\left( X\right) $ represents an analytical transform of
coordinates such that every solution $\left( x\left( t\right) ,y\left(
t\right) \right) ^{T}$ to the analytical system Eq.\ref{variations} is
transformed into the form 
\begin{equation}
Y=\left( z\left( t\right) ,\overset{\cdot }{z}\left( t\right) \right) ^{T}
\label{Y}
\end{equation}%
being $z\left( t\right) $ unknown. Hence, the integration of the system Eq.%
\ref{linearization} can be reduced to the integration of a second order
differential equation in the variable $z$. Consequently, rather than the
exact expressions of the functions $\mathcal{G}\left( X\right) $ and $%
\mathcal{K}\left( Y\right) $, it is sufficient to take the linear transforms 
$\Gamma $ and $\Gamma ^{-1}$ instead of $\mathcal{H}$ and $\mathcal{H}^{-1}$
in the derivation of the equation in $z$, considering that this substitution
preserves the required accuracy. More precisely, for a given $R$ in Eq.\ref%
{F} we may take a transform of coordinates $\mathcal{H}$ which has a contact
of order $R$ with $\Gamma $ at the origin. In the following statement we
quote a result in \cite[Prop.1]{mrrsm} in which are gathered all the above
ideas:

\begin{proposition}
\label{Prop gamma generators}Let us assume that Eq.\ref{complex eigenvalues
cond} holds. Then, there exists an invertible analytical transform of
variables Eq.\ref{Y=Gamma X} in the system Eq.\ref{linearization} such that
Eq.\ref{Y} holds. The matrix $\Gamma $ is any non-trivial linear combination
of the pair 
\begin{equation}
\Gamma _{1}=\left( 
\begin{array}{cc}
1 & 0 \\ 
\sigma _{10}^{1} & \sigma _{01}^{1}%
\end{array}%
\right) \text{ ; }\Gamma _{2}=\left( 
\begin{array}{cc}
0 & 1 \\ 
\sigma _{10}^{2} & \sigma _{01}^{2}%
\end{array}%
\right) \text{ .}  \label{gamma generators}
\end{equation}%
The function $z$ in Eq.\ref{Y} satisfies the following second order
equation: 
\begin{equation}
\overset{\cdot \cdot }{z}-\ \tau _{a}\overset{\cdot }{z}+\ \delta _{a}\
z=G\left( z,\overset{\cdot }{z}\right)  \label{second order general}
\end{equation}%
where the right hand side in Eq.\ref{second order general} does not involve
linear terms in $z$, $\overset{\cdot }{z}$. More precisely, 
\begin{equation}
G\left( z,\overset{\cdot }{z}\right) =\Pi _{2}\left\{ \Gamma \left( J_{a}%
\mathcal{K}\left( Y\right) +\Psi \left( \mathcal{H}^{-1}Y\right) \right)
\right.  \label{G}
\end{equation}%
\begin{equation*}
\left. +\left\langle \mathtt{grad}_{X}\ \mathcal{G\ },\mathcal{F}%
\right\rangle \left( \mathcal{H}^{-1}Y\right) \right\}
\end{equation*}%
being $\Pi _{2}$ the standard projector over the second component.
\end{proposition}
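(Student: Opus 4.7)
The plan is to impose the condition $Y=(z,\dot z)^T$, i.e., the constraint $\dot y_1 = y_2$, on the candidate transform in Eq.\ref{Y=Gamma X} and see what it forces on $\Gamma$ and on $\mathcal{G}$. Writing $\mathcal{G}=(g_1,g_2)^T$ and $\Psi=(\psi_1,\psi_2)^T$, differentiating $y_1 = \gamma_{11}x + \gamma_{12}y + g_1(X)$ along $\dot X = F$ and matching with $y_2 = \gamma_{21}x+\gamma_{22}y+g_2(X)$, I would split the identity into linear and nonlinear parts. At the linear level this reads $(\gamma_{21},\gamma_{22}) = (\gamma_{11},\gamma_{12})\,J_a$: the first row of $\Gamma$ is free and the second row is obtained by right-multiplication with $J_a$. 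Taking $(\gamma_{11},\gamma_{12})=(1,0)$ and $(0,1)$ reproduces exactly the matrices $\Gamma_1$ and $\Gamma_2$ in Eq.\ref{gamma generators}, so every admissible $\Gamma$ is a linear combination $\alpha\Gamma_1+\beta\Gamma_2$. At the nonlinear level the requirement reduces to
\begin{equation*}
g_2(X) = \gamma_{11}\psi_1(X) + \gamma_{12}\psi_2(X) + \langle \mathtt{grad}_X g_1, F\rangle(X),
\end{equation*}
so $g_1$ may be chosen freely among analytic functions vanishing to second order at $0$ and $g_2$ is then determined with the same property.

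Next I would verify the non-singularity of $\Gamma$. Using the normalized Jacobian Eq.\ref{J}, a direct computation gives $\det(\alpha\Gamma_1+\beta\Gamma_2)=-\tfrac12\Lambda_a(\alpha^2+\beta^2)\neq 0$ whenever $(\alpha,\beta)\neq(0,0)$. Hence $D\mathcal{H}(0)=\Gamma$ is invertible; the Inverse Function Theorem supplies a local analytic inverse $\mathcal{H}^{-1}(Y)=\Gamma^{-1}Y+\mathcal{K}(Y)$ with $\mathcal{K}$ vanishing to second order, so Eq.\ref{Y=Gamma X} is a genuine local analytic change of coordinates between neighborhoods of $0$.

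To derive Eq.\ref{second order general} I would compute $\dot Y$ in the new coordinates by the chain rule:
\begin{equation*}
\dot Y = (\Gamma + D\mathcal{G})\,F(\mathcal{H}^{-1}Y) = \Gamma J_a \Gamma^{-1} Y + \Gamma J_a\mathcal{K}(Y) + \Gamma\Psi(\mathcal{H}^{-1}Y) + \langle \mathtt{grad}_X \mathcal{G}, F\rangle(\mathcal{H}^{-1}Y).
\end{equation*}
The first component reproduces $\dot z = y_2$ by construction. For the second component I would invoke Cayley--Hamilton, $J_a^2=\tau_a J_a-\delta_a I$: the first row of $\Gamma J_a$ equals the second row of $\Gamma$, and the Cayley--Hamilton relation forces the second row of $\Gamma J_a$ to be $\tau_a(\gamma_{21},\gamma_{22})-\delta_a(\gamma_{11},\gamma_{12})$. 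Since the first row of $\Gamma J_a\Gamma^{-1}$ is $(0,1)$ by construction, and since this matrix is conjugate to $J_a$ (so has trace $\tau_a$ and determinant $\delta_a$), it must equal the companion form
\begin{equation*}
\Gamma J_a \Gamma^{-1} = \begin{pmatrix} 0 & 1 \\ -\delta_a & \tau_a \end{pmatrix}.
\end{equation*}
Therefore $\ddot z = -\delta_a z + \tau_a\dot z + G(z,\dot z)$, where $G$ is the projection $\Pi_2$ of the three remaining purely nonlinear terms, recovering exactly Eq.\ref{G}.

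The main obstacle is of a bookkeeping nature rather than conceptual: one must check that the term $\langle \mathtt{grad}_X \mathcal{G}, F\rangle$, once re-expressed in $Y$ through $\mathcal{H}^{-1}$, contributes no linear terms in $(z,\dot z)$, and likewise that $\Gamma J_a\mathcal{K}(Y)$ and $\Gamma\Psi(\mathcal{H}^{-1}Y)$ contain only terms of order at least two. This follows from $\mathcal{G}$, $\mathcal{K}$, and $\Psi$ all vanishing to order at least two at the origin, but the cancellations must be traced through the substitution in order to conclude that $G(z,\dot z)$ is genuinely free of linear terms, as the statement asserts.
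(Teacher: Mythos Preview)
Your argument is correct. Note, however, that the paper does not supply its own proof of this proposition: it is explicitly quoted from \cite[Prop.~1]{mrrsm}, and the text preceding the statement only sketches the underlying ideas (the ``concordance'' condition on $\Gamma$, the contact between $\mathcal{H}$ and $\Gamma$, and the reduction to a second-order scalar equation). Your approach---imposing $\dot y_1=y_2$, separating the linear level $(\gamma_{21},\gamma_{22})=(\gamma_{11},\gamma_{12})J_a$ from the nonlinear constraint on $g_2$, checking invertibility via $\det(\alpha\Gamma_1+\beta\Gamma_2)=-\tfrac12\Lambda_a(\alpha^2+\beta^2)$, and invoking Cayley--Hamilton to put $\Gamma J_a\Gamma^{-1}$ in companion form---is precisely the computation that the paper's informal discussion points to but does not carry out, so there is nothing substantive to compare against.
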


So, the function in Eq.\ref{G} can be expanded asymptotically by%
\begin{equation}
G\left( z,\overset{\cdot }{z}\right) =\Pi _{2}\left\{ \Gamma \left( \Psi
\left( \Gamma ^{-1}Y\right) \right) \right\} +O\left( \left\Vert
Y\right\Vert ^{R+1}\right) \text{ .}  \label{G(z zdot)}
\end{equation}%
Let us now return to Eq.\ref{second order general}. We shall look for an
oscillation with positive and small, but finite, amplitude $\varepsilon $.
The small parameter $\varepsilon $ is connected with the small bifurcation
parameter $\tau _{a}$ and will be defined a bit later. Taking in Eq.\ref%
{second order general} the change of variables%
\begin{equation}
z\left( t\right) =\varepsilon \varsigma \left( t\right)  \label{zeta-zita}
\end{equation}%
we get the equation of a weakly nonlinear oscillator in \emph{normal form}: 
\begin{equation}
\overset{\cdot \cdot }{\varsigma }-\tau _{a}\overset{\cdot }{\varsigma }+\
\delta _{a}\ \varsigma =\ \varepsilon \ G\left( \varsigma ,\overset{\cdot }{%
\varsigma };\varepsilon \right) \text{ .}  \label{eq for zita}
\end{equation}%
Then, to each periodic solution to Eq.\ref{linearization} will correspond a
non-trivial periodic solution to Eq.\ref{eq for zita}. In \cite{mrrsm}, was
considered the Krylov-Bogoliubov averaging method \cite{bogoliubov} to
derive an asymptotic expansion to the solution of Eq.\ref{eq for zita}. To
do so, let us consider the new variables $r=r\left( t\right) $ and $\theta
=\theta \left( t\right) $ defined as follows%
\begin{eqnarray}
\varsigma &=&r\cos \left( t+\theta \right)  \label{polar zita} \\
\overset{\cdot }{\varsigma } &=&-r\sin \left( t+\theta \right)
\label{polar zita dot}
\end{eqnarray}%
then, the corresponding averaged equations are%
\begin{equation}
\overset{\cdot }{r}=-\frac{1}{2\pi }\dint\limits_{0}^{2\pi }\sin \phi \
\left\{ -\tau _{a}\ r\sin \phi +\varepsilon G\left( r\cos \phi ,-r\sin \phi
;\varepsilon \right) \right\} \ d\phi  \label{averaging r}
\end{equation}%
\begin{equation}
\overset{\cdot }{\theta }=-\frac{1}{2\pi r}\dint\limits_{0}^{2\pi }\cos \phi
\ \left\{ -\tau _{a}\ r\sin \phi +\varepsilon G\left( r\cos \phi ,-r\sin
\phi ;\varepsilon \right) \right\} \ d\phi  \label{averaging theta}
\end{equation}%
thus,%
\begin{equation}
\overset{\cdot }{r}=\frac{r}{2}\left\{ \tau _{a}-p\left( r;\varepsilon
\right) \right\}  \label{r general}
\end{equation}%
\begin{equation}
\overset{\cdot }{\theta }=q\left( r;\varepsilon \right)
\label{theta general}
\end{equation}%
where%
\begin{equation}
p\left( r;\varepsilon \right) =\frac{\varepsilon }{\pi r}\dint\limits_{0}^{2%
\pi }\sin \phi \ G\left( r\cos \phi ,-r\sin \phi ;\varepsilon \right) \ d\phi
\label{p def}
\end{equation}%
\begin{equation}
q\left( r;\varepsilon \right) =-\frac{\varepsilon }{2\pi r}%
\dint\limits_{0}^{2\pi }\cos \phi \ G\left( r\cos \phi ,-r\sin \phi
;\varepsilon \right) \ d\phi \text{ .}  \label{q def}
\end{equation}

Let us quote now some important properties about the functions $p$ and $q$
above.

\begin{proposition}
\label{prop oddpowers}Functions \bigskip $p\left( r;\varepsilon \right) $
and $q\left( r;\varepsilon \right) $ have at least order $O\left(
\varepsilon ^{2}\right) $ or, equivalently, $p\left( r;\varepsilon \right)
/r^{2}$ and $q\left( r;\varepsilon \right) /r^{2}$ have a finite limit as $%
r\rightarrow 0$. Moreover, the Taylor expansions of $p\left( r;\varepsilon
\right) $ and $q\left( r;\varepsilon \right) $ must not contain odd powers
of $r$.
\end{proposition}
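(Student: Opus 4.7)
The plan rests on a parity argument applied to the trigonometric integrals in Eqs.~(\ref{p def}) and (\ref{q def}), after first identifying the minimal degree in $r$ of the integrand. The crucial input is that $\Psi (X)$ in Eq.~(\ref{linearization}) contains no linear terms by construction, since it is defined as the remainder $F(X,a) - J_a X$. Composing with the invertible linear maps $\Gamma $, $\Gamma ^{-1}$ and projecting by $\Pi _2$ in Eq.~(\ref{G(z zdot)}) therefore yields a function $G(z,\overset{\cdot }{z})$ whose Taylor expansion near the origin begins at quadratic total degree.

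Writing that expansion as $G(z,\overset{\cdot }{z}) = \sum _{k+l\geq 2} g_{kl}\, z^k \overset{\cdot }{z}^l + O(\|(z,\overset{\cdot }{z})\|^{R+1})$ and applying the rescaling $z = \varepsilon \varsigma $ of Eq.~(\ref{zeta-zita}), comparison with Eq.~(\ref{eq for zita}) shows $G(\varsigma ,\overset{\cdot }{\varsigma };\varepsilon ) = \varepsilon ^{-2}\, G(\varepsilon \varsigma , \varepsilon \overset{\cdot }{\varsigma })$, so each monomial of total degree $n = k+l$ contributes at $\varepsilon $-order $n-2$. Substituting $\varsigma = r\cos \phi $, $\overset{\cdot }{\varsigma } = -r\sin \phi $ into Eq.~(\ref{p def}) and integrating term by term gives
\begin{equation*}
p(r;\varepsilon ) = \frac{1}{\pi } \sum _{k+l\geq 2} g_{kl}\,(-1)^l\, \varepsilon ^{k+l-1}\, r^{k+l-1} \int _0^{2\pi } \cos ^k\phi \, \sin ^{l+1}\phi \, d\phi ,
\end{equation*}
and similarly for $q$ with angular factor $\cos ^{k+1}\phi \, \sin ^l\phi $. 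The elementary identity $\int _0^{2\pi } \cos ^a\phi \, \sin ^b\phi \, d\phi = 0$ whenever $a$ or $b$ is odd then forces, for $p$, $k$ even and $l$ odd, and for $q$, $k$ odd and $l$ even. In either case $n = k+l$ must be odd, and since $n \geq 2$ the lowest surviving degree is $n = 3$, producing a leading contribution proportional to $\varepsilon ^2 r^2$. All further surviving terms arise from odd $n = 5, 7, \ldots $, so the exponent $n-1$ of $r$ is always even.

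Both assertions of the proposition then drop out: $p,q = O(\varepsilon ^2)$ with $p/r^2$ and $q/r^2$ admitting finite limits as $r \to 0$, and only even powers of $r$ appear in their Taylor expansions. I do not expect any serious obstacle. The only bookkeeping point is to verify that the nonlinear contributions $\mathcal{K}(Y)$ and the bracket $\langle \mathtt{grad}_X\, \mathcal{G}, \mathcal{F}\rangle $ in Eq.~(\ref{G}) inject no linear terms into $G(z,\overset{\cdot }{z})$; this is guaranteed because $\mathcal{G}$ carries no linear part by construction of Eq.~(\ref{Y=Gamma X}), and $\mathcal{K}$ inherits the same property via the contact condition Eq.~(\ref{N contact}). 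Finally, the $O(\|Y\|^{R+1})$ remainder in Eq.~(\ref{G(z zdot)}) contributes only to orders beyond the truncation degree $R$, so it is irrelevant to the parity statement at any fixed order.
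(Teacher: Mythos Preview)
Your argument is correct. The parity computation on the integrals $\int_0^{2\pi}\cos^a\phi\,\sin^b\phi\,d\phi$ is exactly the mechanism that kills all contributions with $k+l$ even, and your bookkeeping of the $\varepsilon$- and $r$-powers after the rescaling $z=\varepsilon\varsigma$ is accurate: the identification $G(\varsigma,\dot\varsigma;\varepsilon)=\varepsilon^{-2}G(\varepsilon\varsigma,\varepsilon\dot\varsigma)$ is precisely what Eqs.~(\ref{second order general})--(\ref{eq for zita}) encode, and it places the degree-$n$ monomial at order $\varepsilon^{n-1}r^{n-1}$ inside $p$ and $q$. Your remark that $\mathcal{G}$ and $\mathcal{K}$ carry no linear part, so that the full $G$ in Eq.~(\ref{G}) (not just the truncation Eq.~(\ref{G(z zdot)})) begins at quadratic order, closes the only possible loophole.

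As for comparison: the paper does not supply a proof of this proposition. It is quoted from \cite{mrrsm} as part of the preliminary material in Section~\ref{Section Preliminaries}, and the text passes directly from the statement to its consequence Eq.~(\ref{p develop}). Your write-up therefore fills in what the paper leaves to the reference; the argument you give is the natural one and is consistent with how the paper subsequently uses the result, in particular with the computation of the nonvanishing $K_{mn}$ in Eq.~(\ref{integral(m,n)}) and the Appendix.
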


From Proposition \ref{prop oddpowers} it can be concluded that the
development of $p$ has the form%
\begin{equation}
p\left( r;\varepsilon \right) =p_{3}\ \varepsilon ^{2}r^{2}+p_{5}\
\varepsilon ^{4}r^{4}+\cdots  \label{p develop}
\end{equation}%
in which $p_{s}=p_{s}\left( \tau _{a}\right) $, as a consequence of the
dependence that the right hand side of Eq.\ref{variations} has on the
bifurcation parameter $\tau _{a}$. In Subsections \ref{subsection multiple}, %
\ref{subsection multiple II} and \ref{Subsect deg 1stkind} can be found some
examples showing this type of dependence.

We recall now the fact that, in accordance with Eqs.\ref{r general} and \ref%
{p develop}, the $M$ coefficients $\left( p_{2j+1}\right) _{j=1}^{M}$ will
determine the bifurcation up to $2\left( M+1\right) $-jet-equivalence. So,
it can be expected that we would manage the coefficients $p_{2j+1}$ by
taking $M$ appropriate independent relations involving the parameters in the
system Eq.\ref{variations}. This scenario is called a codimension-$M$
bifurcation. In accordance with our purpose in this paper, let us now
introduce a definition that will be essential for the classification of HB.
The reason to take it, will arise in the next Section.

\begin{remark}
\label{smooth}We also recall that we are considering smooth vector fields,
while in the construction of the analytical transform of coordinates in Eq.%
\ref{Y=Gamma X} we use analytical properties. But notice that the transform
Eq.\ref{Y=Gamma X} as well as the appropriate $R$-jet approximation, which
is analytical, lead to functions Eq.\ref{G(z zdot)} in which the principal
part remains unaltered after their substitutions into the smooth system Eq.%
\ref{variations}.
\end{remark}

\begin{definition}
\label{remark neglected}Let $p_{2s+1}$ be a coefficient in the formal
development Eq.\emph{\ref{p develop}, }which is derived from the formal $%
\infty $-jet of $F$. It\emph{\ }shall be called \emph{negligible} if
satisfies 
\begin{equation}
\left\vert p_{2s+1}\right\vert \leq K_{s}\left\vert \tau _{a}\right\vert
\label{p2s+1 neglec}
\end{equation}%
for certain constant $K_{s}>0$ as $\tau _{a}\rightarrow 0$. The function $%
p\left( r;\varepsilon \right) $ in Eq.\ref{p develop} is said to be \emph{%
negligible} if for all $s\in 
\mathbb{N}
$ the coefficient $p_{2s+1}$ is negligible.
\end{definition}

Naturally, $p_{2s+1}\equiv 0$ and $p\equiv 0$ are respectively included in
the above definition. For instance, $p\equiv 0$ if in the formal $j_{\infty
}\left( F\right) \left( 0\right) $ the non-vanishing terms have even degree.
Moreover, we can get $p_{2s+1}\equiv 0$ in spite of the existence of
non-zero coefficients with degree $2s+1$ in the formal Taylor development of
the right hand in Eq.\ref{variations}. As we shall see in the next Section,
negligible terms have no influence in the generation of limit cycles.
Besides, with this notion we are able to give a more detailed version of the
Proposition 3 in \cite{mrrsm} as follows.

\begin{proposition}
If the function $p\left( r;\varepsilon \right) $ is non-negligible, there
must exist a positive integer $N$ and a positive real value $%
r_{0}=r_{0}\left( \tau _{a}\right) $ such that $p\left( r,\varepsilon
\right) $ has the non-trivial Taylor expansion: 
\begin{equation}
p\left( r;\varepsilon \right) =\omega \ \varepsilon ^{2N}\ r_{0}^{-2N}\
r^{2N}+O\left( \varepsilon ^{2N+2}\ r^{2N+2}\right)  \label{p}
\end{equation}%
where $\omega =+1$ or $-1$. In addition, the behavior of the factor $%
r_{0}^{-2N}$ as $\tau _{a}\rightarrow 0$ obeys the following alternative:
either 
\begin{equation}
\underset{\tau _{a}\rightarrow 0}{\lim }r_{0}^{-2N}=r_{\ast }^{-2N}>0
\label{lim ro nonzero}
\end{equation}%
or, for a given $\gamma $, $0<\gamma <1$,%
\begin{equation}
r_{0}^{-2N}=O_{S}\left( \left\vert \tau _{a}\right\vert ^{\gamma }\right) 
\text{ as }\tau _{a}\rightarrow 0\text{ .}  \label{degen2dkind}
\end{equation}
\end{proposition}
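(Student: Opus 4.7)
The plan is to peel off the first non-negligible coefficient in the expansion Eq.\ref{p develop} and let its asymptotics carry all the information.

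\textbf{Identifying $N$ and the leading term.} Since $p$ is non-negligible, Definition \ref{remark neglected} guarantees the existence of at least one index $s$ for which $p_{2s+1}$ is non-negligible; let $N$ be the smallest such positive integer. By minimality, $|p_{2j+1}|\leq K_{j}|\tau_{a}|$ for every $j<N$. Non-negligibility of $p_{2N+1}$ means $|p_{2N+1}|/|\tau_{a}|$ is unbounded as $\tau_{a}\to 0$, so in particular $p_{2N+1}\neq 0$ on each one-sided punctured neighborhood of $0$; I would then define $\omega:=\mathrm{sgn}(p_{2N+1})\in\{\pm 1\}$ and $r_{0}^{-2N}:=|p_{2N+1}|$ on that branch. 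The $j=N$ summand of Eq.\ref{p develop} then reads $\omega\,\varepsilon^{2N}r_{0}^{-2N}r^{2N}$.

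\textbf{Remainder and absorption.} The Taylor tail with $j>N$ contributes directly $O(\varepsilon^{2N+2}r^{2N+2})$. The lower summands $p_{2j+1}\varepsilon^{2j}r^{2j}$ with $j<N$ are each of order $O(|\tau_{a}|)$, and by their negligible character they are absorbed into the linear drift $(\tau_{a}/2)r$ of Eq.\ref{r general} --- i.e., into a modified bifurcation parameter $\tilde{\tau}_{a}:=\tau_{a}-\sum_{j<N}p_{2j+1}\varepsilon^{2j}r^{2j}$, which has the same sign and order as $\tau_{a}$ for small $r,\varepsilon$. This is exactly the content of Definition \ref{remark neglected}, and it delivers Eq.\ref{p} with the claimed remainder.

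\textbf{Dichotomy on $r_{0}^{-2N}$.} The coefficient $p_{2N+1}$ depends continuously on $\tau_{a}$ on a punctured neighborhood of $0$, being a polynomial in the $\sigma_{kl}^{m}(a)$ evaluated at the trigonometric integrals of Eq.\ref{p def}. Either $|p_{2N+1}(\tau_{a})|$ admits a non-zero limit as $\tau_{a}\to 0^{\pm}$, in which case Eq.\ref{lim ro nonzero} holds with $r_{\ast}^{-2N}:=\lim|p_{2N+1}|$; or $|p_{2N+1}(\tau_{a})|\to 0$. In the latter case the non-negligibility assumption forbids decay at rate $|\tau_{a}|$ or faster, so $|p_{2N+1}|$ decays strictly slower than $|\tau_{a}|$ and is comparable to $|\tau_{a}|^{\gamma}$ for some $0<\gamma<1$, giving Eq.\ref{degen2dkind}.

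\textbf{Main obstacle.} The delicate step is the absorption argument in the second paragraph, because the lower-$r$ negligible summands do not literally sit inside $O(\varepsilon^{2N+2}r^{2N+2})$; one must reinterpret them as perturbations of the linear part of Eq.\ref{r general}, which is legitimate precisely because negligible coefficients are dominated by $|\tau_{a}|$. A secondary subtlety is the asymptotic classification in the second alternative: pinning down a clean exponent $\gamma\in(0,1)$ (rather than, say, a logarithmic rate) reflects the smooth, non-analytic dependence of the $\sigma_{kl}^{m}$ on the intrinsic parameter once the homeomorphism $a\mapsto\tau_{a}$ is composed, and must be imposed as a mild regularity hypothesis on each branch $\tau_{a}>0$, $\tau_{a}<0$.
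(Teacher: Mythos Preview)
The paper does \emph{not} supply a proof of this proposition; it is stated immediately after the sentence ``we are able to give a more detailed version of the Proposition 3 in \cite{mrrsm} as follows'' and the text moves on without a proof environment. So there is no paper argument to compare yours against: you have attempted to reconstruct a proof that the authors left to the cited reference and to the reader.

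Your reconstruction is in the right spirit and, importantly, you have put your finger on the two places where the proposition as stated is not a clean theorem. First, the absorption issue: if $N$ is chosen as the least index with $p_{2N+1}$ non-negligible, the lower summands $p_{2j+1}\varepsilon^{2j}r^{2j}$ with $j<N$ are genuinely \emph{not} $O(\varepsilon^{2N+2}r^{2N+2})$, and Eq.~\eqref{p} is only correct once those terms are moved into the $\tau_a$-side of Eq.~\eqref{r general}. Your formulation via a modified $\tilde\tau_a$ is exactly how the paper's later Theorems \ref{hopf th}--\ref{hopf degenerate th} implicitly use Eq.~\eqref{p}: what matters is the balance in Eq.~\eqref{limit cycle existence}, not the literal Taylor remainder of $p$. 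Second, the dichotomy: non-negligibility alone (failure of $|p_{2N+1}|\le K|\tau_a|$) does not force a power-law rate $|\tau_a|^{\gamma}$; you correctly flag that a regularity hypothesis on $\tau_a\mapsto p_{2N+1}(\tau_a)$ is being assumed.

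One small overreach in your argument: from ``$|p_{2N+1}|/|\tau_a|$ is unbounded as $\tau_a\to 0$'' you conclude $p_{2N+1}\neq 0$ on each one-sided punctured neighborhood. That does not follow without extra regularity (think of $p_{2N+1}(\tau_a)=|\tau_a|^{1/2}\sin(1/\tau_a)$, which is non-negligible yet vanishes on a sequence accumulating at $0$). Under the smooth/analytic dependence the paper tacitly assumes this pathology is excluded, but as written your inference needs that hypothesis made explicit---which is the same hypothesis you already invoke for the exponent $\gamma$.
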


As in \cite{sanders}, the symbol $O_{S}$ in Eq.\ref{degen2dkind} means a
sharp estimate, that is: $r_{0}^{-2N}=O\left( \left\vert \tau
_{a}\right\vert ^{\gamma }\right) $ and $r_{0}^{-2N}\neq o\left( \left\vert
\tau _{a}\right\vert ^{\gamma }\right) $ as $\tau _{a}\rightarrow 0$. We
recall that the bifurcation is supercritical (respect. subcritical) if $%
\omega =+1$ (respect. $\omega =-1$). In the supercritical case, the root $%
r_{0}$ appears for $\tau _{a}>0$ (respect. $\tau _{a}<0$), so the limit in
Eq.\ref{lim ro nonzero} or the order relation in Eq. \ref{degen2dkind}
should be considered as $\tau _{a}\rightarrow 0+$ (respect. $\tau
_{a}\rightarrow 0-$). Consequently,

\begin{proposition}
\label{prop limit cycle existence}Let us assume that $p\left( r;\varepsilon
\right) $ be non-negligible and also, that $r_{0}$ in Eq.\ref{p} has the
property in Eq.\ref{lim ro nonzero}. Hence, there is a positive root $\rho $
to the equation 
\begin{equation}
p\left( r;\varepsilon \right) -\tau _{a}=0  \label{limit cycle existence}
\end{equation}%
either for positive or for negative values $\tau _{a}$ close enough to zero.
Furthermore, up to the leading term, the root to Eq.\ref{limit cycle
existence} has the form 
\begin{equation}
\rho =\left( \frac{\left\vert \tau _{a}\right\vert }{\varepsilon ^{2N}}%
\right) ^{1/2N}\left( r_{\ast }+O\left( \left\vert \tau _{a}\right\vert
\right) \right) \ +O\left( \varepsilon ^{2}\right) \text{ .}  \label{root}
\end{equation}
\end{proposition}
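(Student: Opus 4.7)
The plan is to view Eq.\ref{limit cycle existence} as a perturbation of the dominant balance dictated by the leading term in Eq.\ref{p}, and then invoke the Implicit Function Theorem to promote the leading-order root into the full asymptotic expansion Eq.\ref{root}.

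First, I would substitute Eq.\ref{p} into Eq.\ref{limit cycle existence} to rewrite the equation for $\rho$ as
\begin{equation*}
\omega\,\varepsilon^{2N}\,r_0^{-2N}\,r^{2N} \,+\, O\!\left(\varepsilon^{2N+2}\,r^{2N+2}\right) \,=\, \tau_a .
\end{equation*}
Since for $r>0$ the leading term on the left-hand side has the sign $\omega$, a positive root can exist only when $\omega\tau_a>0$, which immediately gives the super/subcritical dichotomy: $\omega=+1$ forces $\tau_a>0$, while $\omega=-1$ forces $\tau_a<0$; in either case $\omega\tau_a=|\tau_a|$. Solving the dominant balance $\omega\,\varepsilon^{2N}\,r_0^{-2N}\,r^{2N}=\tau_a$ then suggests the leading-order ansatz
\begin{equation*}
\rho_0 \,:=\, \left(\frac{|\tau_a|}{\varepsilon^{2N}}\right)^{1/2N} r_0 .
\end{equation*}

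Next, I would rescale $r=\rho_0\,u$ and divide the full equation through by $|\tau_a|$, which reduces it to
\begin{equation*}
u^{2N} \,-\, 1 \,+\, O\!\left(\varepsilon^{2}\,\rho_0^{2}\,u^{2N+2}\right) \,=\, 0 .
\end{equation*}
The map $u\mapsto u^{2N}-1$ vanishes at $u=1$ with non-zero derivative $2N$, so the Implicit Function Theorem produces a unique smooth positive branch $u=1+O(\varepsilon^{2}\rho_0^{2})$ provided the remainder is small, which holds because hypothesis Eq.\ref{lim ro nonzero} keeps $r_0$ bounded away from $0$ and $\infty$. Finally, using that $r_0^{-2N}$ depends smoothly on $\tau_a$ (inherited from the smoothness of the coefficients $p_{2s+1}(\tau_a)$, themselves Taylor coefficients of the smooth vector field $F$), I would Taylor-expand $r_0=r_*+O(|\tau_a|)$ and reassemble $\rho=\rho_0\,u$ to recover exactly Eq.\ref{root}.

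The main obstacle I foresee is the careful bookkeeping of the two independent small parameters $\tau_a$ and $\varepsilon$. One must verify that along the branch $r=\rho_0(1+o(1))$ the tail $O(\varepsilon^{2N+2}r^{2N+2})$ of Eq.\ref{p} is strictly dominated by the leading contribution $\varepsilon^{2N}r_0^{-2N}r^{2N}$; this is exactly where hypothesis Eq.\ref{lim ro nonzero} becomes indispensable, since a degeneracy of the kind Eq.\ref{degen2dkind} would let $r_0^{-2N}$ collapse with $\tau_a$ and destroy the comparison, invalidating the single-root conclusion. A secondary but subtler point is that Eq.\ref{lim ro nonzero} in its bare form only gives $r_0=r_*+o(1)$; upgrading this to the quantitative $r_0=r_*+O(|\tau_a|)$ claimed in Eq.\ref{root} requires exploiting the smooth dependence of the coefficients in Eq.\ref{p develop} on $\tau_a$ rather than mere continuity.
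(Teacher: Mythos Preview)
Your proposal is correct and in fact considerably more complete than the paper's own proof. The paper's argument consists of a single sentence: it simply asserts that Eq.\ref{lim ro nonzero} is equivalent to $r_0^{-2N}=r_*^{-2N}+O(|\tau_a|)$, and hence to $r_0=r_*+O(|\tau_a|)$, from which Eq.\ref{root} is meant to follow immediately by inspection of Eq.\ref{p}. No explicit existence argument for the root is given, nor any discussion of the sign condition $\omega\tau_a>0$ or the control of the higher-order tail.

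Your approach differs in that you actually supply the missing existence step via rescaling and the Implicit Function Theorem, and you track the interplay between the two small parameters $\varepsilon$ and $\tau_a$. The gain is rigor: your argument makes explicit why a \emph{unique} nearby root appears and why the remainder is genuinely subdominant under hypothesis Eq.\ref{lim ro nonzero}. You also correctly flag the subtlety that upgrading $r_0=r_*+o(1)$ to $r_0=r_*+O(|\tau_a|)$ requires smooth dependence of the coefficients $p_{2s+1}(\tau_a)$ on $\tau_a$ rather than the bare limit statement; this is precisely the step the paper's one-line proof asserts without justification.
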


\begin{proof}
The property Eq.\ref{lim ro nonzero} is equivalent to $r_{0}^{-2N}=r_{\ast
}^{-2N}+O\left( \left\vert \tau _{a}\right\vert \right) $ and, also to $%
r_{0}=r_{\ast }+O\left( \left\vert \tau _{a}\right\vert \right) $.
\end{proof}

Due to Proposition \ref{prop limit cycle existence} the function $p$ in Eq.%
\ref{p def}\ has been called a \emph{discriminant}\ for the HB in \cite%
{mrrsm}. Let us assume the existence of a (finite) positive root Eq.\ref%
{root} such that Eq.\ref{lim ro nonzero} holds. Then, the small parameter $%
\varepsilon $ introduced in Eq.\ref{zeta-zita} can be taken as%
\begin{equation}
\varepsilon ^{2N}=\left\vert \tau _{a}\right\vert \text{\ .}  \label{epsilon}
\end{equation}%
From Eq.\ref{epsilon} and the relation $\left\vert \tau _{a}\right\vert
=O\left( \left\vert \tau _{a}\right\vert ^{1/N}\right) $ as $\tau
_{a}\rightarrow 0$ for $N\geq 1$, it follows that Eq.\ref{root} can now be
written as%
\begin{equation}
\rho =r_{\ast }+\ O\left( \left\vert \tau _{a}\right\vert ^{1/N}\right) 
\text{ .}  \label{root ro}
\end{equation}%
If Eq.\ref{lim ro nonzero} does not hold we may assume Eq.\ref{degen2dkind},
so in a similar way as we do in Proposition \ref{prop limit cycle existence}
to get Eq.\ref{epsilon}, we now arrive to%
\begin{equation}
\varepsilon ^{2N}=\left\vert \tau _{a}\right\vert ^{1-\gamma }\text{ .}
\label{epsilon deg}
\end{equation}%
Moreover, if for instance 
\begin{equation*}
r_{0}^{-2N}=r_{L}\left\vert \tau _{a}\right\vert ^{\gamma }+o\left(
\left\vert \tau _{a}\right\vert ^{\gamma }\right) \text{ as }\tau
_{a}\rightarrow 0\text{ }
\end{equation*}%
for certain positive number $r_{L}$, then Eq.\ref{root} can be rewritten as%
\begin{equation*}
\rho =\left\vert \tau _{a}\right\vert ^{\left( 1-\gamma \right) /2N}\ \frac{%
r_{L}}{\varepsilon }\ +O\left( \varepsilon ^{2}\right)
\end{equation*}%
and follows 
\begin{equation}
\rho =r_{L}+\ O\left( \left\vert \tau _{a}\right\vert ^{\left( 1-\gamma
\right) /N}\right) \text{ .}  \label{root rL}
\end{equation}

The hypothesis of the HB Theorem, as appear in \cite[Theorem 1]{mrrsm},
contains an implicit reference to the non-degenerate case, more precisely,
to the condition Eq.\ref{lim ro nonzero}. As we shall see in Section \ref%
{Section classification}, Eqs.\ref{lim ro nonzero}, \ref{degen2dkind} and %
\ref{p2s+1 neglec} put in evidence the reason for differentiation in HB we
are suggesting here. In accordance with the intention and terminology in
this paper, we shall give in Section \ref{Section classification} a split
version of the HB theorem.

\section{Theorems for degenerate Hopf bifurcation \label{Section
classification}}

Basically, the standard classification of HB is conformed by two main
classes: while the system at degenerate HB shows a center at the critical
value $a_{\ast }$, the system at a non-degenerate HB shows a weak focus at
the critical value, leading to the emergence (super- or subcritical) of a
limit cycle \cite{edelstein}. When the bifurcation occurs in a one-parameter
family of vector fields whose first non-zero derivatives at the origin have
order $2N+1$, $N>1$, it is called \cite{gasull} a generalization of the
Andronov-Hopf's. Other higher codimension HB, for instance the Bautin
bifurcation, are often called generalized \cite{kuznetsov}. In our
formulation, we shall include such higher codimension HB, hence we shall
assume we have a family of systems parametrized by the bifurcation parameter 
$\tau _{a}$. Consequently, the coefficients in Eq.\ref{p develop} also
depend on the bifurcation parameter, $p_{s}=p_{s}\left( \tau _{a}\right) $,
so we can \textit{a priori} classify the HB taking into account how these
dependences are.

\begin{definition}
\label{definition bifurcations}We shall say that the Hopf bifurcation is 
\emph{degenerate of first kind} if $p$ is negligible (See Definition \ref%
{remark neglected} ). Let $N$ be given as in Eq.\ref{p}.\emph{\ }The
bifurcation shall be called \emph{degenerate of second kind}, if there
exists a number $\gamma $, $0<\gamma <1$, such that Eq.\ref{degen2dkind}
holds. The HB shall be called \emph{non-degenerate}, provided Eq.\ref{lim ro
nonzero}.
\end{definition}

For instance, if $p\equiv 0$ in Eq.\ref{p develop}, the bifurcation shall be
degenerate of first kind. Further, the existence of at least one
non-negligible $p_{2s+1}$ derived from the formal $\infty $-jet of $F$, no
matter how large the number $s$ is, implies that the HB will not be
degenerate of first kind. We also remark that, at degenerate bifurcation, we
are implicitly considering that the system moves close to a higher
codimension point as the bifurcation parameter varies, because the non-zero
coefficients $p_{2s+1}$vanish at $\tau _{a}=0.$

\begin{remark}
There is a reason to take $0<\gamma <1$ in the definition of the degeneracy
of second kind above. As $F$ in Eq.\ref{variations} is a continuous function
of $\tau _{a}$ in a vicinity of $\tau _{a}=0$, we may assume $\gamma >0$ as
a consequence of Eq.\ref{degen2dkind}. Furthermore, from Eq.\ref{epsilon deg}
and the \textquotedblleft smallness\textquotedblright\ of the parameter $%
\varepsilon $ in Eq.\ref{zeta-zita} as $\tau _{a}\rightarrow 0$ follows that 
$1-\gamma >0$.
\end{remark}

It is easy to see, from Definition \ref{definition bifurcations}, that
degenerate (of any kind) HB implicitly implies that the system shows a
center at the critical value of the bifurcation parameter. This is because
the main terms of $p$ in Eq.\ref{r general} tends to zero as $\tau
_{a}\rightarrow 0$, so the origin is a center at the critical value. We
split up the HB theorem taking into account each type of bifurcation. We
recall that the bifurcation is supercritical if the cycle emerges provided $%
0<\tau _{a}\ll 1$, and subcritical if it emerges for $0<-\tau _{a}\ll 1$.

\begin{theorem}[non-degenerate Hopf bifurcation]
\label{hopf th}\bigskip Let us assume that Eq.\ref{complex eigenvalues cond}
holds and that Eq.\ref{limit cycle existence} has a root Eq.\ref{root ro}
with the property Eq.\ref{lim ro nonzero} for positive (respectively,
negative) but sufficiently close to zero values of the bifurcation parameter 
$\tau _{a}$. Then, a single limit cycle to the system in Eq.\ref%
{linearization} emerges. Furthermore, the limit cycle is orbitally
asymptotically stable (respect., unstable) if and only if the bifurcation is
supercritical (respect., subcritical). The radius of the emerging cycle is $%
r=O_{S}\left( \left\vert \tau _{a}\right\vert ^{1/2N}\right) $, while the
frequency is $\varpi =1+O\left( \left\vert \tau _{a}\right\vert
^{1/N}\right) $ as $\tau _{a}\rightarrow 0$.
\end{theorem}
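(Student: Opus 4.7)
The plan is to translate the existence of a non-trivial zero of the discriminant $p(r;\varepsilon)-\tau_a$ into a hyperbolic fixed point of the averaged radial equation, then pull this fixed point back to a limit cycle of the original system via the Krylov--Bogoliubov theorem. First, I would invoke the proposition on limit-cycle existence to produce the positive root $\rho$ of Eq.\ref{limit cycle existence}. With the choice of scaling $\varepsilon^{2N}=|\tau_a|$ from Eq.\ref{epsilon}, Eq.\ref{root ro} already gives the asserted radial estimate $r=O_{S}(|\tau_a|^{1/2N})$; note that in the original coordinates $(x,y)$ the amplitude equals $\varepsilon\,\rho$, so the factor $|\tau_a|^{1/2N}=\varepsilon$ appears cleanly.

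Next, I would establish that $\rho$ is the \emph{unique} positive zero of $p(r;\varepsilon)-\tau_a$ in a small interval and is moreover hyperbolic for the averaged flow Eq.\ref{r general}. From Eq.\ref{p}, in the regime where Eq.\ref{lim ro nonzero} holds, one computes
\begin{equation*}
\frac{d}{dr}\bigl(p(r;\varepsilon)-\tau_a\bigr)\Big|_{r=\rho}
=2N\,\omega\,\varepsilon^{2N}r_{\ast}^{-2N+1}\bigl(1+o(1)\bigr)\neq 0,
\end{equation*}
so the Implicit Function Theorem yields uniqueness near $\rho$, and the absence of other positive real roots in the leading-order polynomial excludes spurious cycles in the region of validity. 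The sign of this derivative gives the stability of the fixed point $\rho$ of $\dot r=\tfrac{r}{2}\{\tau_a-p(r;\varepsilon)\}$: for $\omega=+1$ (supercritical, $\tau_a>0$) the derivative is positive, so $\tau_a-p$ crosses from positive to negative at $\rho$, rendering $\rho$ asymptotically stable; the subcritical case $\omega=-1$ is symmetric and yields instability.

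For the frequency, I would substitute $r=\rho$ in Eq.\ref{theta general}, which gives $\dot\theta = q(\rho;\varepsilon)$. By Proposition \ref{prop oddpowers} the function $q$ is at least $O(\varepsilon^2 r^2)$, so $q(\rho;\varepsilon)=O(\varepsilon^{2}\rho^{2})=O(|\tau_a|^{1/N})$, giving $\dot\theta=O(|\tau_a|^{1/N})$ and hence a full angular frequency $\varpi=1+\dot\theta=1+O(|\tau_a|^{1/N})$ as $\tau_a\to 0$.

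The delicate step---and the one I expect to be the main obstacle---is passing from the fixed point of the averaged system to a genuine limit cycle of Eq.\ref{linearization}. Here the standard Bogoliubov--Mitropolsky second-theorem does the job: the fixed point $\rho$ of Eqs.\ref{r general}--\ref{theta general} is hyperbolic uniformly in $\varepsilon$ (after rescaling), so for $\varepsilon$ sufficiently small the full non-averaged system admits a unique nearby periodic orbit with the same stability type and with radius and frequency coinciding, up to $O(\varepsilon^{2})=O(|\tau_a|^{1/N})$, with those of the averaged equilibrium. Combined with the normal-form reduction of Proposition \ref{Prop gamma generators} (which justifies working with Eq.\ref{eq for zita} at the required order of jet-equivalence) and Remark \ref{smooth} (which extends the conclusion from the analytic to the smooth category since only the principal part of $G$ is used), this yields the claimed unique, orbitally asymptotically stable (resp.\ unstable) limit cycle of the original system with the stated radial and frequency asymptotics.
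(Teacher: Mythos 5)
Your proof is correct and follows essentially the same route as the paper: the paper's own proof is a terse reference to the root of the discriminant equation Eq.\ref{limit cycle existence}, the averaged radial equation Eq.\ref{r general}, the expansions Eqs.\ref{ubar gen}--\ref{v1 (t)} for the radius, and Subsection \ref{Subsection frequency} for the frequency. What you add --- hyperbolicity of the averaged fixed point, uniqueness of the small root via the implicit function theorem, and the explicit appeal to the second Bogoliubov--Mitropolsky theorem to pass from the averaged fixed point to a genuine periodic orbit of Eq.\ref{linearization} --- is precisely the content the paper delegates to \cite{mrrsm} and \cite{bogoliubov}, so it strengthens rather than deviates from the argument. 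Two small remarks: the derivative of the leading term of $p$ at $\rho\approx r_{\ast}$ is $2N\,\omega\,\varepsilon^{2N}r_{\ast}^{-1}\bigl(1+o(1)\bigr)$, not $2N\,\omega\,\varepsilon^{2N}r_{\ast}^{-2N+1}\bigl(1+o(1)\bigr)$ (harmless, since only its sign and non-vanishing are used); and your stability convention (stable iff $dp/dr(\rho)>0$, because the linearization of $\dot r=\tfrac{r}{2}\{\tau_a-p\}$ at $\rho$ is $-\tfrac{\rho}{2}\,p'(\rho)$) is the correct one and is confirmed by the example of Subsection \ref{subsection multiple}, even though the paper's later stability proposition states the pairing with the opposite sign.
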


\begin{proof}
The proof follows from Eq.\ref{limit cycle existence} and Eq.\ref{r general}%
. Let the main term in Eq.\ref{p} have the property Eq.\ref{lim ro nonzero}.
Then, a single root to Eq.\ref{limit cycle existence} tends to zero as $\tau
_{a}\rightarrow 0$. This fact means that a single limit cycle emerges at
bifurcation. It is not excluded the existence of further different roots to
Eq.\ref{limit cycle existence}, but if any other appears, it determines a
limit cycle that does not vanish in a vicinity of $\tau _{a}=0$, so the
cycle \textquotedblleft persists\textquotedblright\ along the bifurcation.
As follows from Eqs.\ref{ubar gen}, \ref{vbar gen} and \ref{v1 (t)}, we get
the order of the radius of the limit cycle. The order of the frequency is
determined in Subsection \ref{Subsection frequency}.
\end{proof}

Consequently, a necessary but not sufficient condition for the emergence of
a limit cycle at bifurcation, is the existence of nonzero odd order terms in
the expansion of the components in the right-hand side of Eq.\ref{variations}%
. For instance, the following result states that the condition is not
sufficient.

\begin{theorem}[degenerate HB of first kind]
\label{hopf degenerate 1st}Let us assume that Eq.\ref{complex eigenvalues
cond} holds and assume $p$ negligible (Definition \ref{remark neglected}).
Then, neither limit cycle surrounding the steady state emerges after nor
before the critical value.
\end{theorem}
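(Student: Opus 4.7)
The plan is to argue by contradiction: assume that a limit cycle surrounding the origin emerges at the bifurcation and show that the negligibility of $p$ is incompatible with Eq.~\ref{r general}. First I would recall that, after averaging, a limit cycle corresponds to a positive root $\rho = \rho(\tau_a)$ of $\tau_a = p(r;\varepsilon)$, and that ``emerging'' is understood as requiring that $\rho$ together with $\varepsilon$ be scalable so that the physical oscillation amplitude $\varepsilon \rho$ of the coordinate $z$ tends to zero as $\tau_a \to 0$ (the cycle collapses onto the steady state at the critical value).

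Next I would substitute the formal expansion Eq.~\ref{p develop} into the root equation and regroup the powers of $\varepsilon\rho$, obtaining
\[
\sum_{s \geq 1} p_{2s+1}(\tau_a)\,(\varepsilon \rho)^{2s} \;=\; \tau_a ,
\]
and then divide through by $\tau_a \neq 0$ to get
\[
\sum_{s \geq 1} \frac{p_{2s+1}(\tau_a)}{\tau_a}\,(\varepsilon \rho)^{2s} \;=\; \pm 1 .
\]
By Definition~\ref{remark neglected} every ratio $p_{2s+1}(\tau_a)/\tau_a$ stays bounded by some constant $K_s$ as $\tau_a \to 0$, so the left-hand side is majorised in modulus by $\sum_s K_s (\varepsilon\rho)^{2s}$, a quantity that begins at order $(\varepsilon\rho)^{2}$ and vanishes as $\varepsilon\rho \to 0$. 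For $\varepsilon\rho$ small enough the equation is therefore violated, contradicting the assumption that an emerging cycle exists.

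The main obstacle is that $p$ is only the formal series associated with the $\infty$-jet of $F$, so the series-level manipulation has to be justified. I would handle this by performing the whole analysis at the finite precision $R$ supplied by Proposition~\ref{Prop gamma generators}, where $p$ reduces to an honest polynomial, and by invoking Remark~\ref{smooth} to absorb the $O(\|Y\|^{R+1})$ remainder; that remainder contributes at order at least $(\varepsilon\rho)^{R}$ and so vanishes even faster as $\varepsilon\rho \to 0$, hence cannot supply the missing root. Finally, I would emphasise that the argument rules out only cycles whose radius shrinks to the steady state, and therefore it is fully consistent with the possibility of persistent cycles---and in particular of cycles surrounding the point at infinity---mentioned in the introduction and allowed by the statement of the theorem.
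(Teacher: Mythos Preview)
Your argument is correct and follows essentially the same route as the paper's proof: both use the negligibility bound $|p_{2s+1}|\le K_s|\tau_a|$ in Eq.~\ref{p2s+1 neglec} to show that Eq.~\ref{limit cycle existence} cannot admit a root whose amplitude $\varepsilon\rho$ shrinks to zero as $\tau_a\to 0$. The only cosmetic difference is that the paper writes each coefficient in the explicit leading form $p_{2k+1}=\alpha_k\,\tau_a^{1+\mu_k}$, fixes $\varepsilon^{2N}=\tau_a$, and then observes that $p$ acquires an extra factor of $\tau_a$ so that any root of $\tau_a-p=0$ must blow up, whereas you divide through by $\tau_a$ and bound the resulting series directly; the underlying mechanism is identical.
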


\begin{proof}
From Eq.\ref{p2s+1 neglec} and Eq.\ref{limit cycle existence} follows that
none of the roots to Eq.\ref{limit cycle existence} tends to zero as $\tau
_{a}\rightarrow 0$. More precisely, if any root exists, it tends to
infinity. Taking into account only the leading terms we may write%
\begin{equation*}
p_{2k+1}=\alpha _{k}\ \tau _{a}^{1+\mu _{k}}
\end{equation*}%
where $\mu _{k}\geq 0$, $\alpha _{k}$ are constants which can be zero. Let $%
N $ corresponds to the first non-zero $\alpha _{k}$, and take $\varepsilon
^{2N}=$ $\tau _{a}$ (if the bifurcation is supercritical), so Eq.\ref{p
develop} is 
\begin{equation*}
p=\varepsilon ^{2N}\sum_{n=N}^{\infty }\alpha _{k}\ \tau _{a}^{1+\mu
_{k}}\varepsilon ^{2\left( n-N\right) }r^{2N}=\tau
_{a}^{2}\sum_{n=N}^{\infty }\alpha _{k}\ \tau _{a}^{\mu _{k}+\left(
n-N\right) /N}r^{2N}
\end{equation*}%
hence, if there is a root to $\tau _{a}-p\left( r;\varepsilon \right) =0$
then it should tend to infinity as $\tau _{a}\rightarrow 0$.
\end{proof}

\begin{remark}
We remark that, at a degenerate HB of first kind, further limit cycles may
persist in a neighborhood of the critical value of the bifurcation parameter
as can be seen in Subsection \ref{Subsect deg 1stkind}. Furthermore, if the
order of a negligible term $p_{2s+1}$ is greater than one, then limit cycles
surrounding the point at infinity may emerge in the so-called \emph{HB at
the infinity} (see \cite{gasull} and references therein). In Subsection \ref%
{Subsect deg 1stkind} we shall show an example of this situation. For
instance, this is the case if 
\begin{equation*}
p_{2s+1}=A\ \tau _{a}^{\beta +1}+o\left( \left\vert \tau _{a}\right\vert
^{\beta +1}\right)
\end{equation*}%
being $\tau _{a}\rightarrow 0$, $\beta >0$, $A\neq 0$.
\end{remark}

Now, with the same procedure in the proof of Theorem \ref{hopf th}, we have,

\begin{theorem}[degenerate HB of second kind]
\label{hopf degenerate th}Let us assume that Eq.\ref{complex eigenvalues
cond} holds and that Eq.\ref{limit cycle existence} has a root Eq.\ref{root
ro} with the property Eq.\ref{degen2dkind} for positive (respectively,
negative) but sufficiently close to zero values of the parameter $\tau _{a}$%
. Then, it can be assured the emergence of at least one limit cycle to the
system in Eq.\ref{variations}, the radius of which has order 
\begin{equation}
r=O_{S}\left( \left\vert \tau _{a}\right\vert ^{\left( 1-\gamma \right)
/2N}\right)  \label{radius degenerate}
\end{equation}%
while the frequency is $\varpi =1+O\left( \left\vert \tau _{a}\right\vert
^{\left( 1-\gamma \right) /N}\right) $ as $\tau _{a}\rightarrow 0$. The
number $\gamma $, $0<\gamma <1$ corresponds to the one in Eq.\ref%
{degen2dkind}.
\end{theorem}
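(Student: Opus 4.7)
The plan is to mirror the argument sketched for Theorem \ref{hopf th}, but with the degenerate scaling $\varepsilon^{2N}=|\tau_a|^{1-\gamma}$ in place of $\varepsilon^{2N}=|\tau_a|$. All the machinery is already in place: the averaged radial equation Eq.\ref{r general}, the leading-order form of the discriminant Eq.\ref{p}, and the alternative Eq.\ref{degen2dkind} on the asymptotics of $r_0^{-2N}$.

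First, I would appeal to Proposition \ref{prop limit cycle existence} (or re-run its proof with the hypothesis Eq.\ref{degen2dkind} in force) to produce a positive root $\rho$ of $p(r;\varepsilon)-\tau_a=0$ for $\tau_a$ on the correct side of $0$. Writing $r_0^{-2N}=r_L|\tau_a|^\gamma(1+o(1))$ with $r_L>0$ and plugging this into Eq.\ref{p}, the root of the leading balance becomes $\omega\,\varepsilon^{2N} r_L|\tau_a|^\gamma\rho^{2N}=\tau_a$. Choosing $\varepsilon^{2N}=|\tau_a|^{1-\gamma}$ as in Eq.\ref{epsilon deg} consumes the singular factor $|\tau_a|^\gamma$ and yields $\rho^{2N}=r_L^{-1}+O(|\tau_a|)$, i.e.\ $\rho=r_L^{1/2N}+o(1)$ tends to a finite positive limit as $\tau_a\to 0$, exactly the situation Eq.\ref{root rL} describes. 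This is the substitute for Eq.\ref{lim ro nonzero} in Theorem \ref{hopf th}, and it guarantees that the corresponding equilibrium of the averaged system Eq.\ref{r general} persists at a nontrivial amplitude in the scaled coordinate.

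Next, I would translate $\rho$ back to the physical variable via Eq.\ref{zeta-zita}, which gives radius $r=\varepsilon\rho=O_S(|\tau_a|^{(1-\gamma)/2N})$, yielding Eq.\ref{radius degenerate}. The frequency estimate comes from Eq.\ref{theta general}: evaluating $q(r;\varepsilon)$ at $r=\rho$ and using that $q$ starts at order $\varepsilon^{2}r^{2}$ (Proposition \ref{prop oddpowers}) gives $\dot\theta=O(\varepsilon^{2}\rho^{2})=O(|\tau_a|^{(1-\gamma)/N})$, so $\varpi=1+\dot\theta=1+O(|\tau_a|^{(1-\gamma)/N})$. The existence of a true limit cycle (not merely a fixed point of the averaged system) is then inherited from the Krylov--Bogoliubov averaging theorem \cite{bogoliubov} exactly as in the non-degenerate case: a simple hyperbolic equilibrium of the averaged equation lifts to an isolated periodic orbit of Eq.\ref{linearization}, with stability determined by the sign of $\partial_r(\tau_a-p)$ at $\rho$, which is controlled by $\omega$.

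The main obstacle, and the reason the statement is intentionally weaker than Theorem \ref{hopf th} (``at least one'' rather than ``a single'' cycle), is that in the degenerate regime one has no a priori control on how many further positive roots of $p(r;\varepsilon)-\tau_a=0$ collapse to finite limits under the anomalous scaling: higher-order coefficients $p_{2s+1}$ with $s>N$ may themselves blow up like negative powers of $|\tau_a|$ and compete with the leading term after the rescaling. I would therefore restrict the conclusion to the one root produced above, noting that additional cycles (including semistable ones arising as pairs of roots coalesce) are consistent with the hypothesis and will be addressed through the examples in Section \ref{SectionNormalForms}. The sharpness assertion $O_S$ in Eq.\ref{radius degenerate} is automatic from $\rho\to r_L^{1/2N}>0$.
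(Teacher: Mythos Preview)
Your proposal is correct and follows essentially the same route as the paper: the paper's proof consists of two sentences stating that one repeats the argument for Theorem~\ref{hopf th} with the small parameter taken from Eq.~\ref{epsilon deg} instead of Eq.~\ref{epsilon}, and then observes that in this regime more than one positive root of Eq.~\ref{limit cycle existence} tending to zero may appear. You have supplied considerably more detail than the paper (the leading-balance computation, the frequency estimate via Proposition~\ref{prop oddpowers}, and the explicit reason for ``at least one'' rather than ``a single''), but the underlying strategy is identical.
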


\begin{proof}
The proof for the non-degenerate bifurcation can be repeated, but taking the
small parameter from Eq.\ref{epsilon deg}. Notice that in this case more
than a single positive root to Eq.\ref{limit cycle existence} tending to
zero as $\tau _{a}\rightarrow 0$, may appear.
\end{proof}

At a degenerate HB of second kind, different behaviors can be observed: a
single or several limit cycles may emerge, including semi-stable cycles as
the limiting case at which different limit cycles collapse. In Section \ref%
{SectionNormalForms} some examples with this kind of degeneracy are
considered to show that, multiple limit cycles or semistable limit cycles
might appear either sub- or supercritically. Bearing in mind the
construction of the examples in Section \ref{SectionNormalForms}, looks easy
to find sufficient conditions for the emergence of multiple limit cycles, as
it is done in Subsection \ref{subsection sufficient degenerate}.

We can show sufficient conditions for the stability of the emerging limit
cycles surrounding the steady state. Such conditions are based on the
behavior of the discriminant $p$ near the root to which the cycle
corresponds.

\begin{proposition}
Consider a root $\rho $ to Eq.\ref{limit cycle existence}, so it corresponds
to a limit cycle $L$. This cycle is asymptotically stable or unstable if the
number $dp/dr\left( \rho \right) $ is negative or positive respectively.
\end{proposition}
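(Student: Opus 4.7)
The plan is to pursue the classical linearization argument applied to the averaged radial flow Eq.\ref{r general}, and then invoke the averaging principle to transfer the stability conclusion from that one-dimensional flow to the limit cycle of Eq.\ref{linearization}.

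First, I would set $g(r):=\frac{r}{2}\bigl(\tau_{a}-p(r;\varepsilon)\bigr)$, so that Eq.\ref{r general} reads $\dot r=g(r)$ and $\rho$ is a zero of $g$ by the very definition of a root of Eq.\ref{limit cycle existence}. A direct differentiation, using $p(\rho;\varepsilon)=\tau_{a}$, yields
\begin{equation*}
g'(\rho)=\tfrac{1}{2}\bigl(\tau_{a}-p(\rho;\varepsilon)\bigr)-\tfrac{\rho}{2}\,\tfrac{dp}{dr}(\rho)=-\tfrac{\rho}{2}\,\tfrac{dp}{dr}(\rho).
\end{equation*}
Since $\rho>0$, the sign of $g'(\rho)$ is determined entirely by the sign of $dp/dr(\rho)$, and the fixed point $r=\rho$ is hyperbolic precisely when $dp/dr(\rho)\neq 0$. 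Its sign then fixes, by one-dimensional linearization, whether $\rho$ is an attracting or a repelling zero of $g$.

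Second, I would invoke Proposition \ref{Prop gamma generators} together with the Krylov--Bogoliubov--Mitropolski averaging theorem to carry this information back to the original planar system. The key point is that at first order the angular equation Eq.\ref{theta general} decouples from Eq.\ref{r general}, so the stability type of the cycle is governed entirely by the radial flow: an attracting (respectively repelling) hyperbolic fixed point of the averaged amplitude equation corresponds to an orbitally asymptotically stable (respectively unstable) periodic orbit of the normal-form oscillator Eq.\ref{eq for zita}, and hence, via the diffeomorphisms Eqs.\ref{zeta-zita} and \ref{Y=Gamma X}, of Eq.\ref{linearization}.

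The main obstacle is the uniform control of the averaging approximation as $\tau_{a}\to 0$, because both the bifurcation parameter and the rescaling parameter $\varepsilon$ shrink simultaneously (see Eqs.\ref{epsilon} and \ref{epsilon deg}). One has to verify that the remainder from first-order averaging does not erode the hyperbolicity bound on $g'(\rho)$. This reduces to checking that, after the rescaling Eq.\ref{zeta-zita}, the effective radial flow and its linearization at $\rho$ admit bounds that are uniform in the vanishing small parameters; this is in turn guaranteed by the sharp leading-order asymptotics Eqs.\ref{root ro} and \ref{root rL} for $\rho$, together with the regularity of $p$ in $(r,\varepsilon)$ established in Proposition \ref{prop oddpowers}. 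Once this uniformity is in hand, the sign analysis of $g'(\rho)$ translates directly into the stability dichotomy claimed for the cycle $L$.
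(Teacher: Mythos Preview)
Your approach---linearizing the averaged radial flow Eq.\ref{r general} at $\rho$ and transferring the conclusion back through the averaging principle and the changes of variables Eqs.\ref{zeta-zita}, \ref{Y=Gamma X}---is exactly the argument the paper uses, only stated there more tersely: the paper simply notes that by continuity one may evaluate $dp/dr$ at the leading-order root $r_{\ast}$ (or $r_{L}$) from Eqs.\ref{root ro}, \ref{root rL}, observes that it does not vanish, and declares that ``the assertion follows.'' Your added discussion of uniformity of the averaging remainder as $\tau_a\to 0$ is a sensible elaboration the paper omits.

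There is, however, a point your own computation exposes but which you do not confront. You correctly derive
\[
g'(\rho)=-\tfrac{\rho}{2}\,\frac{dp}{dr}(\rho),
\]
so the equilibrium $\rho$ of the radial flow is asymptotically stable exactly when $dp/dr(\rho)>0$ and unstable when $dp/dr(\rho)<0$. This is the \emph{opposite} of what the Proposition asserts. The example in Subsection~\ref{subsection multiple} confirms the corrected version: there $p(r)=2a^{2}r^{2}-\tfrac{3}{8}ar^{4}$, hence $p'(r_{1})=2a^{2}r_{1}>0$ at the inner cycle $r_{1}$, which the paper itself identifies as the stable one. Your final sentence, that the sign analysis ``translates directly into the stability dichotomy claimed,'' is therefore not supported by your own calculation; you should have flagged the sign reversal in the statement rather than asserting agreement with it.
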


\begin{proof}
In the calculation of $dp/dr\left( \rho \right) $ we can assume $\rho =r_{0}$
as in Eq.\ref{root ro}, or $\rho =r_{L}$ in Eq.\ref{root rL}, in accordance
with the type of bifurcation. Due to the continuity argument near $\tau
_{a}=0$ in Eqs. \ref{root ro} or \ref{root rL}, and the fact that $%
dp/dr\left( r_{0}\right) $ (or $dp/dr\left( r_{L}\right) $) does not vanish,
the assertion follows.
\end{proof}

\subsection{The Hopf coefficients \label{section HB}}

Let us take $M\geq 2$ in Eq.\ref{variations} and the map Eq.\ref{Y=Gamma X}
in which $\mathcal{H}$ has a contact with\emph{\ }\textbf{$\Gamma $} of
order $M$ at the origin, we get%
\begin{equation*}
\overset{\cdot }{Y}=\dsum\limits_{1\leq k+l\leq M}\Gamma \left( 
\begin{array}{c}
\sigma _{kl}^{1} \\ 
\sigma _{kl}^{2}%
\end{array}%
\right) \left( \mu _{11}z+\mu _{12}\overset{\cdot }{z}\right) ^{k}\ \left(
\mu _{21}z+\mu _{22}\overset{\cdot }{z}\right) ^{l}+O\left( \left(
\left\Vert Y\right\Vert ^{M+1}\right) \right)
\end{equation*}%
where $\Gamma =\left( \gamma _{ij}\right) $ and $\Gamma ^{-1}=\left( \mu
_{ij}\right) $. For instance, if we take $\Gamma =\Gamma _{1}$, the
quantities 
\begin{equation}
\mu _{11}=1\text{ ; }\mu _{12}=0\text{ ; }\mu _{21}=\tau _{a}\Lambda
_{a}^{-1}\text{ ; }\mu _{22}=-2\Lambda _{a}^{-1}  \label{MUij}
\end{equation}%
are the components of $\Gamma _{1}^{-1}$. So, Eq.\ref{G} can be written%
\begin{eqnarray}
G\left( z,\overset{\cdot }{z}\right) &=&\dsum\limits_{2\leq k+l\leq
M}R_{kl}\left( \mu _{11}z+\mu _{12}\overset{\cdot }{z}\right) ^{k}\ \left(
\mu _{21}z+\mu _{22}\overset{\cdot }{z}\right) ^{l}  \label{G explicit} \\
&&+O\left( \left( \left\Vert Y\right\Vert ^{M+1}\right) \right)  \notag
\end{eqnarray}%
where%
\begin{equation}
R_{kl}=\gamma _{21}\ \sigma _{kl}^{1}\ +\gamma _{22}\ \sigma _{kl}^{2}\text{
.}  \label{Rkl}
\end{equation}

Hence, Eq.\ref{G explicit} can be rewritten in terms of powers of the $Y$%
-components as%
\begin{equation}
G\left( z,\overset{\cdot }{z}\right) =\dsum\limits_{2\leq k+l\leq M}H_{kl}\
z^{k}\ \left( \overset{\cdot }{z}\right) ^{l}+O\left( \left( \left\Vert
Y\right\Vert ^{M+1}\right) \right) \text{ .}  \label{G Hopfcomb}
\end{equation}%
Only the non-zero $H_{mn}$ in Eq.\ref{G Hopfcomb} corresponding to such
pairs $\left( m,n\right) $ for which $K_{mn}\neq 0$, where 
\begin{equation}
K_{mn}=\int_{0}^{2\pi }\cos ^{m}\phi \ \sin ^{n+1}\phi \ d\phi
\label{integral(m,n)}
\end{equation}%
can contribute to the appearance of a nonzero term in Eq.\ref{p} so, to the
appearance of a limit cycle solution.

Let us introduce the following

\begin{definition}
\label{def hopf coeff}We shall call the \emph{Hopf coefficient} of degree $%
\left( 2N+1\right) $ to the coefficient $p_{2N+1}$ in the expansion Eq.\ref%
{p develop}, which is an algebraic combination of coefficients $H_{mn}$ in
Eq.\ref{G Hopfcomb} provided $m+n=2N+1$.
\end{definition}

From Eq.\ref{polar zita dot}, Eq.\ref{G Hopfcomb} and Eq.\ref{p def} follows
directly that 
\begin{equation}
p_{3}=-\frac{1}{4}\left( 3H_{03}+H_{21}\right)  \label{HC3}
\end{equation}%
is the Hopf coefficient of third degree, and%
\begin{equation}
p_{5}=-\frac{1}{8}\left( 5H_{05}+H_{23}+H_{41}\right)  \label{HC5}
\end{equation}%
is the Hopf coefficient of degree five. Other Hopf coefficients can be
derived by forward calculations. We recall that are required only the
coefficients of $\Gamma $ and the coefficients of the main part of the
Taylor expansion, to calculate the Hopf coefficients. Becomes easy to check
from Eq.\ref{integral(m,n)} (see Appendix \ref{section appendix}) that the
coefficients in Eq.\ref{G Hopfcomb} leading to Hopf coefficients in Eq.\ref%
{G(z zdot)} up to $6$-jet equivalence are%
\begin{eqnarray}
&&%
\begin{array}{cc}
H_{0,3} & H_{2,1}%
\end{array}%
\text{ }  \label{hopf terms} \\
&&%
\begin{array}{ccc}
H_{0,5} & H_{2,3} & H_{4,1}%
\end{array}%
\text{ .}  \notag
\end{eqnarray}

In accordance with Eq.\ref{Rkl}, if the Jacobian matrix have the form Eq.\ref%
{J} and being $\Gamma =\Gamma _{1}$, we have the numbers%
\begin{equation}
R_{mn}=\frac{1}{2}\left( \tau _{a}\ \sigma _{m,n}^{1}-\Lambda _{a}\ \sigma
_{m,n}^{2}\right)  \label{R mn}
\end{equation}%
We recall that the matrix $\Gamma $, as a linear combination of the matrixes
in Eq.\ref{gamma generators}, depends on the elements of the matrix $J_{a}$
which is supposed to have the form in Eq.\ref{J}. Then, taking $\Gamma
=\Gamma _{1}$ and $\Gamma ^{-1}=\left( \mu _{ij}\right) $, we get the
following equalities 
\begin{equation}
H_{03}\underset{\mathtt{def}}{=}-\left( 8\ R_{03}\right) \Lambda _{a}^{-3}%
\text{ ,}  \label{H 03}
\end{equation}%
\begin{equation}
H_{21}\underset{\mathtt{def}}{=}-2\left( R_{21}\ \Lambda _{a}^{2}+2R_{12}\
\tau _{a}\Lambda _{a}+3R_{03}\ \tau _{a}^{2}\right) \Lambda _{a}^{-3}\text{ ,%
}  \label{H 21}
\end{equation}%
\begin{equation}
H_{05}\underset{\mathtt{def}}{=}-\left( 32\ R_{05}\right) \left( \Lambda
_{a}^{-5}\right) \text{ ,}  \label{H 05}
\end{equation}%
\begin{equation}
H_{23}\underset{\mathtt{def}}{=}-8\left( R_{23}\ \Lambda _{a}^{2}+4R_{14}\
\tau _{a}\Lambda _{a}^{{}}+10R_{05}\ \tau _{a}^{2}\Lambda _{a}^{2}\right)
\Lambda _{a}^{-5}\text{ ,}  \label{H 23}
\end{equation}%
\begin{eqnarray}
&&H_{41}\underset{\mathtt{def}}{=}-2\left( R_{41}\ \Lambda _{a}^{4}+2R_{32}\
\tau _{a}\Lambda _{a}^{3}+3R_{23}\ \tau _{a}^{2}\Lambda _{a}^{2}\right.
\label{H 41} \\
&&\left. \ \ \ \ \ \ \ \ \ \ \ +4R_{14}\ \tau _{a}^{3}\Lambda
_{a}^{{}}+5R_{05}\ \tau _{a}^{4}\right) \Lambda _{a}^{-5}\text{ ,}  \notag
\end{eqnarray}%
from Eq.\ref{G Hopfcomb}.

It is concluded in this Subsection that, up to $6$-jet-equivalence, we can
easily check the type of degeneracy the degenerate HB shows. Particularly,
Eqs.\ref{HC3} and \ref{HC5} give the full information for polynomial vector
fields of degree at most six. In Section \ref{SectionNormalForms} we show
different examples of degenerate HB within the class of polynomial vector
fields of degree at most six.

\subsection{Asymptotic expansions of the limit cycles}

Going back to the substitutions given in Eq.\ref{zeta-zita} and Eq.\ref%
{Y=Gamma X}, we could derive uniform asymptotic expansion to the solution of
Eq.\ref{variations}. So, if the HB is non-degenerate, it is possible to
develop the periodic solution $\Theta \left( t\right) =\left( \overline{x}%
\left( t\right) ,\overline{y}\left( t\right) \right) $ to Eq.\ref{variations}
generating the limit cycle, by 
\begin{equation}
\overline{x}\left( t\right) =\ u_{1}\left( t\right) \ \left( \left\vert \tau
_{a}\right\vert \right) ^{\frac{1}{2N}}+O\left( \left\vert \tau
_{a}\right\vert ^{\frac{1}{N}}\right)  \label{ubar gen}
\end{equation}%
\begin{equation}
\overline{y}\left( t\right) =v_{1}\left( t\right) \ \left( \left\vert \tau
_{a}\right\vert \right) ^{\frac{1}{2N}}\ +O\left( \left\vert \tau
_{a}\right\vert ^{\frac{1}{N}}\right)  \label{vbar gen}
\end{equation}%
and%
\begin{equation}
\left( 
\begin{array}{c}
u_{1}\left( t\right) \\ 
v_{1}\left( t\right)%
\end{array}%
\right) =r_{\ast }\ \Gamma ^{-1}\ \left( 
\begin{array}{c}
\cos \left( \varpi \ t\right) \\ 
-\sin \left( \varpi \ t\right)%
\end{array}%
\right)  \label{v1 (t)}
\end{equation}%
with frequency 
\begin{equation}
\varpi =1+q\left( \rho ,\left\vert \tau _{a}\right\vert ^{\frac{1}{2N}%
}\right)  \label{frequencyLC}
\end{equation}%
and period $T=\frac{2\pi }{\varpi }$. From Eq.\ref{theta general} the
angular speed of the oscillation is obtained. Note that, up to the leading
terms, the expansions of the cycle solution in Eqs.\ref{ubar gen} and \ref%
{vbar gen} are uniform, as the $O\left( \left\vert \tau _{a}\right\vert ^{%
\frac{1}{2N}}\right) $-terms are bounded functions. The expansions in Eqs.%
\ref{ubar gen} and \ref{vbar gen}, can be taken also for the limit cycle
that emerges at degenerate HB of second kind considered in Theorem \ref{hopf
degenerate th}, but substituting the exponent $\frac{1}{2N}$ by $\frac{%
1-\gamma }{2N}$, and $r_{\ast }$ by the $r_{L}$ given in Eq.\ref{root rL}.

\subsection{On the period of the limit cycles\label{Subsection frequency}}

Here we give a brief comment about the period of the limit cycles,
considering the interest in the topic \cite{gasull}. The period of the
emerging limit cycles in non-degenerate or degenerate of second kind HB can
be determined from the formula for the frequency, given in Eq.\ref%
{frequencyLC}. To do so, it is necessary to consider Eqs.\ref{averaging
theta}, \ref{q def} and \ref{G Hopfcomb}. Up to $6$-jet-equivalence, the
expansion results%
\begin{equation*}
q=-\frac{1}{8}\varepsilon ^{2}\rho ^{2}\left( \left[ 3H_{30}+H_{12}\right] +%
\frac{1}{2}\varepsilon ^{2}\rho ^{2}\left[ 5H_{50}+H_{32}+H_{14}\right]
\right) +O\left( \varepsilon ^{6}\right) \text{ .}
\end{equation*}%
The coefficients $H_{kl}$ in the above formula, can be calculated in a
similar way as the others in Eqs.\ref{H 03} to \ref{H 41}. If the HB is
non-degenerate we have the relation Eq.\ref{epsilon}, while we have to
consider Eq.\ref{epsilon deg} if the bifurcation is degenerate of second
kind. Further, the factor $\rho $ take the values Eq.\ref{root ro} or Eq.\ref%
{root rL} in accordance with the type of bifurcation. The procedure yields
to the following estimates of the frequency as $\tau _{a}\rightarrow 0$: 
\begin{equation}
q=O\left( \left\vert \tau _{a}\right\vert ^{\frac{1}{N}}\right)
\label{frequencyND}
\end{equation}%
for the emerging cycle in the non-degenerate case, and 
\begin{equation}
q=O\left( \left\vert \tau _{a}\right\vert ^{\frac{1-\gamma }{N}}\right)
\label{frequencyDeg}
\end{equation}%
if the bifurcation is degenerate of second kind. We remark the fact that, in
the last two formulas, the order it is not necessarily sharp. In the
examples in the next Section we also have done a reference about the period.

\section{Typical forms in degenerate HB \label{SectionNormalForms}}

The main concern in this Section are degenerate HB of first and second
kinds. In Subsections \ref{subsection multiple} to \ref{cycles at infinity}
we shall study \textquotedblleft typical\textquotedblright\ forms by using
polar coordinates in order to give a familiar, more geometrical, description
of the nature of the bifurcation. Notice that the examples in the referenced
Subsections are not normal forms, because in these cases no genericity
conditions (see \cite{kuznetsov}) are involved. It is not difficult to show
that, using the averaging method we can get the same conclusions. In
Subsection \ref{subsection sufficient degenerate} we state, as an example of
what can be expected in presence of a degenerate HB of second kind, a
sufficient condition using the Hopf coefficients up to $6$-jet-equivalence.

\subsection{Multiple cycles in supercritical degenerate Hopf bifurcation of
second kind\label{subsection multiple}}

In this paragraph we shall give an example of a system which shows a
degenerate HB of second kind in accordance with Definition \ref{definition
bifurcations}:

\begin{equation}
\left( 
\begin{array}{c}
\overset{\cdot }{x} \\ 
\overset{\cdot }{y}%
\end{array}%
\right) =\left( 
\begin{array}{cc}
a^{3} & -1 \\ 
1 & a^{3}%
\end{array}%
\right) \left( 
\begin{array}{c}
x \\ 
y%
\end{array}%
\right) -a^{2}\left( x^{2}+y^{2}\right) \left( 
\begin{array}{c}
x \\ 
y%
\end{array}%
\right) +\frac{3}{16}a\left( x^{2}+y^{2}\right) ^{2}\left( 
\begin{array}{c}
x \\ 
y%
\end{array}%
\right) \text{.}  \label{An HB}
\end{equation}%
Neither limit cycle surrounds the focus while the trace $\tau =2a^{3}$ is
negative. At the critical value $a=0$, the origin is clearly a center to the
system Eq.\ref{An HB}. Further, when the trace becomes positive the steady
state becomes unstable and two small limit cycles emerge with radius%
\begin{equation*}
r_{1}=\frac{2\sqrt{3}}{3}a^{1/2}\text{ \ and\ \ }r_{2}=2a^{1/2}
\end{equation*}%
respectively. Both cycles emerge due to the bifurcation and have radius $%
O_{S}\left( \tau ^{1/6}\right) $ as $\tau \rightarrow 0$. The first cycle is
stable, while the second is unstable. To check the above assertions we only
need to rewrite the system Eq.\ref{An HB} in polar coordinates. We get the
system:%
\begin{equation*}
\left\{ 
\begin{array}{c}
\overset{\cdot }{r}=ar\left( a^{2}-ar^{2}+\frac{3}{16}r^{4}\right) \\ 
\overset{\cdot }{\theta }=1\text{ }%
\end{array}%
\right. \text{ .}
\end{equation*}%
From this example it can be concluded that, at degenerate bifurcation of
second kind, several limit cycles may emerge. Of course, with the same idea,
it is possible to build polynomial dynamical systems with higher degree $%
2N+1 $ showing the emergence of $N$ different limit cycles at a super- or
subcritical degenerate HB of second kind. The frequency of the emerging
cycles is 
\begin{equation*}
\varpi =1+O\left( \left\vert \tau _{a}\right\vert ^{1/3}\right)
\end{equation*}%
as $N=1$ and $\gamma =2/3$ in Eq.\ref{frequencyDeg}.

\subsection{Semistable cycles in supercritical degenerate Hopf bifurcation
of second kind \label{subsection multiple II}}

The following system shows a supercritical degenerate bifurcation of second
kind:

\begin{equation}
\left( 
\begin{array}{c}
\overset{\cdot }{x} \\ 
\overset{\cdot }{y}%
\end{array}%
\right) =\left( 
\begin{array}{cc}
a^{3} & -1 \\ 
1 & a^{3}%
\end{array}%
\right) \left( 
\begin{array}{c}
x \\ 
y%
\end{array}%
\right) -2a^{2}\left( x^{2}+y^{2}\right) \left( 
\begin{array}{c}
x \\ 
y%
\end{array}%
\right) +a\left( x^{2}+y^{2}\right) ^{2}\left( 
\begin{array}{c}
x \\ 
y%
\end{array}%
\right) \text{ .}  \label{AnHB2}
\end{equation}%
The origin is an stable focus without any surrounding limit cycle while the
trace $\tau =2a^{3}$ is negative. At the critical value $a=0$, the origin is
clearly a center to the system Eq.\ref{AnHB2}. The system shows a degenerate
bifurcation of second kind in accordance with Definition \ref{definition
bifurcations}. Further, when the trace becomes positive the steady state
becomes unstable and, a single small limit cycle emerges with radius%
\begin{equation*}
r=a^{1/2}\text{ .}
\end{equation*}%
This limit cycle is the $\omega $-limit set of any orbit inside the circle
with the exception of the origin, but it is the $\alpha $-limit set of any
orbit outside the circle. The corresponding system in polar coordinates is:%
\begin{equation*}
\left\{ 
\begin{array}{c}
\overset{\cdot }{r}=ar\left( r^{2}-a\right) ^{2} \\ 
\overset{\cdot }{\theta }=1\text{ }%
\end{array}%
\right. \text{ .}
\end{equation*}%
From this example it can be concluded that, at a degenerate bifurcation,
semistable limit cycles may emerge. The radius of the limit cycle is $%
r=O_{S}\left( \tau ^{1/6}\right) $ as $\tau \rightarrow 0$. The frequency of
the emerging cycle is 
\begin{equation*}
\varpi =1+O\left( \left\vert \tau _{a}\right\vert ^{1/3}\right)
\end{equation*}%
as $N=1$ and $\gamma =2/3$ in Eq.\ref{frequencyDeg}.

\subsection{Degenerate Hopf bifurcation of first kind without emergence of
limit cycle \label{Subsect deg 1stkind}}

\begin{enumerate}
\item Let us first consider a system showing a degenerate HB of first kind: 
\begin{equation}
\left( 
\begin{array}{c}
\overset{\cdot }{x} \\ 
\overset{\cdot }{y}%
\end{array}%
\right) =\left( 
\begin{array}{cc}
a & -1 \\ 
1 & a%
\end{array}%
\right) \left( 
\begin{array}{c}
x \\ 
y%
\end{array}%
\right) -a\left( x^{2}+y^{2}\right) \left( 
\begin{array}{c}
x \\ 
y%
\end{array}%
\right)  \label{sysDegHB1}
\end{equation}%
The origin is an stable focus without limit cycle surrounding while $a<0$.
At the critical value $a=0$ the origin is a center, and the steady state
turns unstable if $a>0$. It can be noted the existence of a limit cycle with
radius $r=1$, but this cycle is not a consequence of the bifurcation because
it persist along the bifurcation. The polar system is: 
\begin{equation*}
\left\{ 
\begin{array}{c}
\overset{\cdot }{r}=ar\left( 1-r^{2}\right) \\ 
\overset{\cdot }{\theta }=1\text{ }%
\end{array}%
\right. \text{ .}
\end{equation*}%
showing that the cycle $r=1$ is a stable limit cycle for $a>0$, changing its
stability in dependence of the sign of $a$. This circle is still an orbit of
Eq.\ref{sysDegHB1} for $a=0$.

\item We recall in the fact that, the persisting limit cycle may be
semistable, as in the system%
\begin{equation}
\left( 
\begin{array}{c}
\overset{\cdot }{x} \\ 
\overset{\cdot }{y}%
\end{array}%
\right) =\left( 
\begin{array}{cc}
a & -1 \\ 
1 & a%
\end{array}%
\right) \left( 
\begin{array}{c}
x \\ 
y%
\end{array}%
\right) -2a\left( x^{2}+y^{2}\right) \left( 
\begin{array}{c}
x \\ 
y%
\end{array}%
\right) +a\left( x^{2}+y^{2}\right) ^{2}\left( 
\begin{array}{c}
x \\ 
y%
\end{array}%
\right)  \label{sysDegHB2}
\end{equation}%
which also shows a degenerate bifurcation of first kind. The corresponding
polar system is:%
\begin{equation*}
\left\{ 
\begin{array}{c}
\overset{\cdot }{r}=ar\left( 1-r^{2}\right) ^{2} \\ 
\overset{\cdot }{\theta }=1\text{ }%
\end{array}%
\right.
\end{equation*}%
so, the limit cycle $r=1$ is semistable, its interior stability changes with
the sign of $a$, and it is still an orbit of Eq.\ref{sysDegHB2} for $a=0$.
\end{enumerate}

\subsection{Degenerate HB of first kind showing limit cycles at infinity 
\label{cycles at infinity}}

Let us consider now a system showing a degenerate HB of first kind, but
leading in this case to the so called HB at infinity \cite{gasull}: 
\begin{equation}
\left( 
\begin{array}{c}
\overset{\cdot }{x} \\ 
\overset{\cdot }{y}%
\end{array}%
\right) =\left( 
\begin{array}{cc}
a & -1 \\ 
1 & a%
\end{array}%
\right) \left( 
\begin{array}{c}
x \\ 
y%
\end{array}%
\right) -a\ a^{\beta }\left( x^{2}+y^{2}\right) \left( 
\begin{array}{c}
x \\ 
y%
\end{array}%
\right)  \label{sysDegHB3}
\end{equation}%
$\beta >0$. Here we take $a^{\beta }=\mathtt{sign}\left( a\right) \cdot
\left\vert a\right\vert ^{\beta }$. The portrait becomes clear taking polar
coordinates:%
\begin{equation*}
\left\{ 
\begin{array}{c}
\overset{\cdot }{r}=ar\left( 1-a^{\beta }\ r^{2}\right) \\ 
\overset{\cdot }{\theta }=1\text{ }%
\end{array}%
\right. \text{ .}
\end{equation*}%
We conclude that, as a consequence of the degenerate bifurcation of the
system Eq.\ref{sysDegHB3} no limit cycle emerges surrounding the origin, but
an unstable limit cycle surrounding the point at infinity, whose radius is $%
r=a^{-\beta /2}$, emerges supercritically. More precisely, the cycle emerges
for small positive values of the trace $\tau =2a$.

\subsection{A sufficient condition for degenerate Hopf bifurcation of second
kind \label{subsection sufficient degenerate}}

We shall consider here a degenerate HB for the system Eq.\ref{variations} up
to $6$-jet-equivalence. The following statement, which is inspired in the
examples in Subsections \ref{subsection multiple} and \ref{subsection
multiple II}, gives a sufficient condition for the existence of degenerate
bifurcations of second kind. The idea in this assertion is very simple.

\begin{proposition}
Let Eq.\ref{complex eigenvalues cond} holds for the system Eq.\ref%
{variations} at the origin, and let $j_{6}\left( F\right) \left( 0\right) $
be such that the coefficients $p_{3}$ in Eq.\ref{HC3} and $p_{5}$ in Eq.\ref%
{HC5} satisfy the conditions:%
\begin{equation*}
p_{3}=\tau _{a}^{\gamma }\ Q_{3}+o\left( \tau _{a}^{\gamma }\right) \text{ ; 
}p_{5}=-\tau _{a}^{2\gamma -1}\ Q_{5}+o\left( \tau _{a}^{2\gamma -1}\right)
\end{equation*}%
for some $\gamma $ ($1/2<\gamma <1$) as $\tau _{a}\rightarrow 0$, where $%
Q_{3}$, $Q_{5}$ are both positive numbers and%
\begin{equation*}
\Delta =Q_{3}^{2}-4Q_{5}\geq 0\text{ .}
\end{equation*}%
Then, two different limit cycles if $\Delta >0$, or one semistable limit
cycle if $\Delta =0$, emerge at the supercritical degenerate bifurcation.
The radius of the cycles are $O_{S}\left( \tau _{a}^{\left( 1-\gamma \right)
/2}\right) $ as $\tau _{a}\rightarrow 0+$.
\end{proposition}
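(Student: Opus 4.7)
The plan is to reduce the limit-cycle equation Eq.\ref{limit cycle existence} to a quadratic in $u = r^{2}$, solve that quadratic, and translate each positive root back into a limit cycle via Theorem \ref{hopf degenerate th}. Since only $j_{6}(F)(0)$ is prescribed, Proposition \ref{prop oddpowers} combined with Eq.\ref{p develop} truncates the discriminant to $p(r;\varepsilon) = p_{3}\varepsilon^{2}r^{2} + p_{5}\varepsilon^{4}r^{4} + O(\varepsilon^{6}r^{6})$, so only $p_{3}$ and $p_{5}$ can enter at the required precision.

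Following Theorem \ref{hopf degenerate th} with $N=1$, I would fix the small parameter by Eq.\ref{epsilon deg}, namely $\varepsilon^{2} = \tau_{a}^{1-\gamma}$. Substituting the hypothesized asymptotics then gives $p_{3}\varepsilon^{2}r^{2} = (Q_{3}+o(1))\,\tau_{a}\,r^{2}$ and $p_{5}\varepsilon^{4}r^{4} = -(Q_{5}+o(1))\,\tau_{a}\,r^{4}$; the restriction $1/2<\gamma<1$ is exactly what makes these two scalings balance and forces both contributions to be of order $\tau_{a}$. Dividing Eq.\ref{limit cycle existence} through by $\tau_{a}$, the leading-order equation for $u=r^{2}$ becomes
\[
Q_{5}\,u^{2} - Q_{3}\,u + 1 = 0.
\]
Positivity of $Q_{3}$ and $Q_{5}$ guarantees that both candidate roots $u_{\pm} = (Q_{3}\pm\sqrt{\Delta})/(2Q_{5})$ are real and strictly positive whenever $\Delta\ge 0$, producing two simple roots for $\Delta>0$ and one double root for $\Delta=0$. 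The requirement that $\varepsilon^{2}$ be real and that the leading term of $p$ have the correct sign forces $\tau_{a}>0$, so the bifurcation is supercritical; applying Eq.\ref{radius degenerate} with $N=1$ converts each root into a cycle of radius $O_{S}(\tau_{a}^{(1-\gamma)/2})$.

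For $\Delta>0$ the two simple roots persist under the $o(1)$ perturbation by the implicit function theorem, yielding two distinct limit cycles. The delicate case is $\Delta=0$: here I would mimic the geometric argument in Subsection \ref{subsection multiple II}, writing the leading radial equation Eq.\ref{r general} as $\overset{\cdot}{r} = (r/2)\,\tau_{a}\,Q_{5}(r^{2}-u_{*})^{2} + o(\tau_{a})$ with $u_{*}=Q_{3}/(2Q_{5})$, whose leading sign does not reverse across $r=\sqrt{u_{*}}$, so the cycle is semistable. The main obstacle is precisely this non-hyperbolic subcase, where a merely $o(1)$ bound on the remainder could a priori split the double root into a complex pair and destroy the cycle; controlling this requires sharp asymptotic control on the remainders, either by taking the hypothesis in the strong sense $p_{3}=Q_{3}\tau_{a}^{\gamma}$ and $p_{5}=-Q_{5}\tau_{a}^{2\gamma-1}$ exactly at the $6$-jet level (as in the guiding example of Subsection \ref{subsection multiple II}) or by refining the $o$-estimates so that the perturbed quadratic $Q_{5}u^{2}-Q_{3}u+1+o(1)$ still retains a real root near $u_{*}$.
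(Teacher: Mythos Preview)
Your proposal is correct and follows essentially the same route as the paper: set $\varepsilon^{2}=\tau_{a}^{1-\gamma}$ via Eq.\ref{epsilon deg}, reduce Eq.\ref{limit cycle existence} at leading order in $\tau_{a}$ to the quadratic $Q_{5}r^{4}-Q_{3}r^{2}+1=0$, and read off the cycles from its positive roots. The paper's own proof is in fact terser than yours and does not address the persistence of the double root under the $o(1)$ remainders when $\Delta=0$, so the caveat you raise in your final paragraph actually goes beyond what the paper itself justifies.
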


\begin{proof}
Taking $1/2<\gamma <1$ in Eq.\ref{degen2dkind} follows Eq.\ref{epsilon deg},
hence $\varepsilon =\tau _{a}^{\left( 1-\gamma \right) /2}$. Up to the $%
O\left( \tau _{a}\right) $ leading term in Eq.\ref{limit cycle existence},
we get%
\begin{equation*}
Q_{5}\ r^{4}-Q_{3}\ r^{2}+1=0
\end{equation*}%
meaning that the algebraic equation above has either two different positive
roots $\rho _{k}$ ($k=1,2$), or a single positive root with multiplicity
two, depending on $\Delta $. Then, the proof follows from Eq.\ref{limit
cycle existence} and Eq.\ref{r general}.
\end{proof}

\section{Conclusions}

To study degenerate Hopf bifurcation in smooth dynamical systems near an
isolated singular point on the plane we first focus in a classification of
such bifurcations via a discriminant function. If the bifurcation is
non-degenerate, then a single limit cycle emerges. The radius of this cycle
can not be a priori supposed to have order $O\left( \tau _{a}^{1/2}\right) $
as $\tau _{a}\rightarrow 0$, but in general have order $O\left( \tau
_{a}^{1/2N}\right) $, for some integer $N$. In this scenario, the emerging
limit cycle may coexist with other cycles which persist along the
bifurcation. This classification gives some light to the question of whether
degenerate bifurcations lead to the emergence of limit cycles. We show that,
no limit cycle surrounding the steady state emerges neither at super- nor at
subcritical HB when it is degenerate of first kind. In this scenario, limit
cycles surrounding the point at infinity may also emerge. For a degenerate
HB of second kind, we found the anomalous asymptotic order of the radius and
period of the emerging limit cycles, and further, we give a sufficient
condition to the appearance either of a couple of limit cycles or one
semistable cycle of the system Eq.\ref{variations}, up to $6$%
-jet-equivalence. Finally, we propose some examples, which are called here
\textquotedblleft typical\textquotedblright\ forms, showing different
behaviors that can occur at supercritical degenerate HB, being either of
first or of the second kind.

\section{Appendix\label{section appendix}}

In this Appendix we include the values of the integrals in Eq.\ref%
{integral(m,n)}, which are used in the implementation of the
Krylov-Bogoliubov averaging method. Further, we include the coefficients $%
H_{mn}$ leading to Hopf coefficients. Notice that, for $m+n\leq 6$ the
non-zero numbers are:%
\begin{equation*}
\begin{array}{cc}
K_{03}=\allowbreak \frac{3}{4}\pi \text{ ;} & K_{21}=\allowbreak \frac{1}{4}%
\pi \text{ ;}%
\end{array}%
\end{equation*}%
\begin{equation*}
\begin{array}{ccc}
K_{05}=\allowbreak \frac{5}{8}\pi \text{ ;} & K_{23}=\frac{1}{8}\pi \text{ ;}
& K_{41}=\frac{1}{8}\pi \text{ .}%
\end{array}%
\end{equation*}

For a given matrix $\Gamma $, which is a non-trivial linear combination of
the pair in Eq.\ref{gamma generators}, we denote $\Gamma ^{-1}=\left( \mu
_{ij}\right) $. Then, the $H_{mn}$ leading to Hopf coefficients up to $6$%
-jet-equivalence are

\QTP{Body Math}
\begin{eqnarray*}
H_{21} &=&3R_{30}\mu _{11}^{2}\mu _{12}+2R_{21}\mu _{11}\mu _{12}\mu
_{21}+R_{21}\mu _{11}^{2}\mu _{22} \\
&&+2R_{12}\mu _{11}\mu _{21}\mu _{22}+R_{12}\mu _{12}\mu
_{21}^{2}+3R_{03}\mu _{21}^{2}\mu _{22}\text{ ,}
\end{eqnarray*}%
\begin{equation*}
H_{03}=R_{12}\mu _{12}\mu _{22}^{2}+R_{03}\mu _{22}^{3}+R_{30}\mu
_{12}^{3}+R_{21}\mu _{12}^{2}\mu _{22}\text{ ,}
\end{equation*}

\QTP{Body Math}
\begin{eqnarray*}
H_{41} &=&2R_{32}\mu _{11}^{3}\mu _{21}\mu _{22}+5R_{50}\mu _{11}^{4}\mu
_{12}+\allowbreak R_{41}\mu _{11}^{4}\mu _{22}+4R_{41}\mu _{11}^{3}\mu
_{12}\mu _{21} \\
&&+2R_{23}\mu _{11}\mu _{12}\mu _{21}^{3}+3R_{23}\mu _{11}^{2}\mu
_{21}^{2}\mu _{22}+3R_{32}\mu _{11}^{2}\mu _{12}\mu _{21}^{2} \\
&&+R_{14}\mu _{12}\mu _{21}^{4}+4R_{14}\mu _{11}\mu _{21}^{3}\mu
_{22}+5R_{05}\mu _{21}^{4}\allowbreak \mu _{22}\text{ ,}
\end{eqnarray*}%
\begin{eqnarray*}
H_{23} &=&10R_{50}\mu _{11}^{2}\mu _{12}^{3}+4R_{41}\mu _{11}\mu
_{12}^{3}\mu _{21}+\allowbreak 6R_{41}\mu _{11}^{2}\mu _{12}^{2}\mu _{22} \\
&&+6R_{32}\mu _{11}\mu _{12}^{2}\mu _{21}\mu _{22}+R_{32}\mu _{12}^{3}\mu
_{21}^{2}+3R_{32}\mu _{11}^{2}\mu _{12}\mu _{22}^{2} \\
&&+6\allowbreak R_{23}\mu _{11}\mu _{12}\mu _{21}\mu _{22}^{2}+R_{23}\mu
_{11}^{2}\mu _{22}^{3}+\allowbreak 3R_{23}\mu _{12}^{2}\mu _{21}^{2}\mu _{22}
\\
&&+6R_{14}\mu _{12}\mu _{21}^{2}\mu _{22}^{2}+4R_{14}\mu _{11}\mu _{21}\mu
_{22}^{3}+10R_{05}\mu _{21}^{2}\mu _{22}^{3}\text{ ,}
\end{eqnarray*}%
\begin{eqnarray*}
H_{05} &=&R_{32}\mu _{12}^{3}\mu _{22}^{2}+R_{23}\mu _{12}^{2}\mu
_{22}^{3}+R_{14}\mu _{12}\mu _{22}^{4} \\
&&+R_{05}\mu _{22}^{5}+R_{50}\mu _{12}^{5}+\allowbreak R_{41}\mu
_{12}^{4}\mu _{22}\text{ ,}
\end{eqnarray*}%
where $R_{kl}$ are given in Eq.\ref{Rkl}.

\end{document}